\theoremstyle{plain}
\newtheorem{theorem}{Theorem}
\newtheorem{corollary}{Corollary}
\newtheorem{lemma}{Lemma}
\theoremstyle{definition}
\newtheorem{definition}{Definition}
\newtheorem{example}{Example}
\newtheorem{remark}{Remark}
\numberwithin{theorem}{section}
\numberwithin{corollary}{section}
\numberwithin{lemma}{section}
\numberwithin{definition}{section}
\numberwithin{example}{section}
\numberwithin{remark}{section}
\renewcommand{\Re}{\ensuremath{{\rm Re\,}}}
\renewcommand{\leq}{\leqslant}
\renewcommand{\geq}{\geqslant}
\newcommand{\intl}{\int\limits}
\newcommand{\Tt}{(T_t)_{t\geq 0}}
\newcommand{\St}{(S_t)_{t\geq 0}}
\newcommand{\Xt}{(X_t)_{t\geq 0}}
\newcommand{\Yt}{(Y_t)_{t\geq 0}}
\newcommand{\Ee}{\mathds E}
\newcommand{\Pp}{\mathds P}
\newcommand{\nnorm}[1]{\|#1\|}
\newcommand{\1}{\mathds 1}
\newcommand{\comp}{\mathds C}
\newcommand{\real}{\mathds R}
\newcommand{\rn}{{{\mathds R}^d}}
\newcommand{\nat}{\mathds N}
\newcommand{\bu}{\cdot}
\newcommand{\cont}{C}
\newcommand{\ffi}{\varphi}
\newcommand{\eps}{\varepsilon}
\newcommand{\supp}{\mathop{\mathrm{supp}}\nolimits}
\newcommand{\id}{\mathop{\mathrm{Id}}\nolimits}
\begin{document}
\frenchspacing

\title[Lagrangian and Hamiltonian Feynman formulae]{Lagrangian and Hamiltonian Feynman formulae for some Feller semigroups and their perturbations}

\author{YANA A.\ BUTKO}

\address{Department of Fundamental Sciences, Bauman Moscow State Technical
University\\105005, 2nd Baumanskaya str., 5, Moscow, Russia\\
yanabutko@yandex.ru}

\author{REN\'E L.\ SCHILLING}

\address{Institut f\"ur Mathematische Stochastik, Technische Universit\"at Dresden\\
Zellescher Weg, 12-14, D-01069 Dresden,
Germany\\
rene.schilling@tu-dresden.de}

\author{OLEG G.\ SMOLYANOV}

\address{Department of Mechanics and Mathematics,
Lomonosov Moscow State University\\119992, Vorob'evy gory, 1, Moscow, Russia\\
Smolyanov@yandex.ru}

\maketitle

\begin{abstract}
A Feynman formula is a representation of a solution of an initial
(or initial-boundary) value problem for an evolution equation (or,
equivalently, a representation of the semigroup resolving the
problem) by a limit of $n$-fold iterated integrals of some
elementary functions as $n\to\infty$. In this note we obtain some
Feynman formulae for a class of semigroups  associated with Feller
processes.
 Finite dimensional integrals in the Feynman formulae
give  approximations for functional integrals in some Feynman--Kac
formulae corresponding to the underlying processes. Hence, these Feynman
formulae give an effective  tool to calculate functional integrals
 with
respect to probability measures generated by these Feller processes
and, in particular, to obtain  simulations of Feller
processes.

\medskip\noindent
\textsc{Keywords} Feynman formulae; Feynman--Kac formulae; approximations of
functional integrals, approximations of transition densities.

\medskip\noindent
\textsc{MSC 2010:} 47D07, 47D08, 35C99,  60J35, 60J51, 60J60.
\end{abstract}

%%%%%%%%%%%%%---abstract-----%%%%%%%%%%%%%%%%%%%%%%%%%%%%%%%%%%%%%%%

\section{\bfseries Introduction}
In this note we consider a class of semigroups  associated with Feller
processes. Feller  processes are
continuous-time Markov processes, which generalize the class of
stochastic processes with stationary and independent increments, or
L\'{e}vy processes. Note that many diffusion processes  belong
to this class.  Every Feller process $(\xi_t)_{t\ge0}$ in $\real^d$
generates a strongly continuous positivity preserving contraction
semigroup $(T_t)_{t\ge0}$  on  the space $C_\infty(\real^d)$ of
continuous functions vanishing at infinity: $T_tf(q)=
\mathds{E}^q[f(\xi_t)]$ for any $f\in C_\infty(\real^d)$. Due to
Courr\`ege it is  known that (under a mild richness condition on
the domain) the infinitesimal generator $A$ of a Feller semigroup is
a pseudo-differential operator ($\Psi$DO, for short), i.e.\ an
operator of the form
$$
    Af(q)= -H(q,D)f(q) = -(2\pi)^{-d}\intl_{\real^d}\intl_{\real^d}e^{i(q-x)\cdot p} H(q,p)f(x)\,dx\,dp,
    \quad f\in C_c^\infty(\real^d).
$$
The symbol of the operator, the function
$-H:\real^d\times\real^d\to\mathds{C}$, $(q,p)\mapsto -H(q,p)$, is for
fixed $q$, given in terms of a L\'evy-Khintchine representation
\begin{gather*}
   H(q,p)
    = a(q) + i\ell(q)\cdot p +  p\cdot Q(q)p + \intl_{y\neq 0} \left(1-e^{i p\cdot y} +
     \frac{i p\cdot
     y}{1+|y|^2}\right)\,N(q,dy),
\end{gather*}
where, for each fixed $q$,  $\ell(q)\in\real^d$, $Q(q)$ is a positive
semidefinite symmetric matrix    and $N(q,dy)$ is a measure kernel on
 $\real^d\setminus\{0\}$ such that $\int_{y\neq 0} \frac{|y|^2}{1+|y|^2}\,N(q,dy)<\infty$. Note that these
 \emph{negative definite symbols} do not belong to any of the
  classical symbol classes of $\Psi$DOs; consequently we do not have a H\"{o}rmander or Maslov symbolic calculus at our disposal.

   In a similar way, each operator $T_t$ can be represented as a pseudo-differential operator $\lambda_t(\cdot,D)$ with the symbol
    $\lambda_t(q,p) = \mathds{E}^q\big[ e^{i(\xi_t-q)\cdot p}\big]$. It is known that $-H(q,p)=\lim_{t\to0} \frac{\lambda_t(q,p)-1}{t}$,
     see e.g.\ \cite{jac-pota,schilling-pota,schilling-schnurr}. If $(\xi_t)_{t\ge 0}$ is a L\'{e}vy process, we have $H(q,p)=H(p)$ and
      $\lambda_t(q,p)=e^{-tH(p)}$ --- this is due to the fact that the generator is an operator with constant ``coefficients''(i.e. it is independent
      of the state space variable $q$). In the  case where we have variable ``coefficients'', there is no such straightforward connection
      between the symbols of the semigroup and the generator and this gives rise to several interesting problems:
       which negative definite symbols $H(q,p)$ lead to Feller processes and, if so, how can we represent
        or approximate the symbol $\lambda_t(q,p)$. The existence problem has been discussed at length in a series
        of papers, see \cite{NJ,schilling-boettcher,JSch} and the literature given there,
        and we would like now to investigate the problem how to represent the semigroup resp.\ its symbol
        if the (symbol of the) generator is known.

Consider an evolution equation $\frac{\partial f}{\partial
t}(t,q)=-H(q,D)f(t,q)$, where  $-H(\cdot,D)$ is a generator of some
Feller process $(\xi_t)_{t\ge0}$. Following the terminology of
mathematical physics, we call $H(\cdot,D)$ the \emph{Hamiltonian} of
the physical system, which is described by the above evolution
equation. The solution of the Cauchy problem for this equation with
 initial data $f(0,q)=f_0(q)$ can be obtained by the Feynman-Kac
formula $f(t,q)\equiv (T_tf_0)(q)=\mathds{E}^q[f_0(\xi_t)]$. Here
the expectation $\mathds{E}^q[f_0(\xi_t)]$ is a functional integral (path
integral) over the set of paths of the process $(\xi_t)_{t\ge0}$ with
respect to the measure generated by this process. If
$(\xi_t)_{t\ge0}$ is a diffusion process, then
$\mathds{E}^q[f_0(\xi_t)]=\int_{C([0,t],
\real^d)}f_0(\xi_t)\mu(d\xi)$, where $\mu$ is a Gaussian measure,
corresponding to this process; in particular, the Wiener measure
corresponds to the process of Brownian motion.

The heuristic notion of a path integral has been introduced by R.
Feynman, see \cite{Feyn1}, \cite{Feyn2}, to obtain the
solution of the Schr\"{o}dinger equation with a potential. Feynman
has defined this integral as a limit of some finite dimensional
integrals; actually, these integrals range over Cartesian powers of
the configuration (or phase) space of the system, described by the
Schr\"{o}dinger equation. In  modern terminology this kind of
path integrals are called \emph{Feynman path integrals with respect
to a Feynman pseudomeasure on the set of paths in the configuration
(or phase) space}.

The classical Feynman-Kac formula, representing the solution of the
Cauchy problem for the heat equation by a functional integral with
respect to the Wiener measure can also be obtained applying
Feynman's construction. Here the functional integral is a limit of
$n$-fold iterated integrals containing Gaussian exponents which are
transition densities of a  Brownian motion.
This construction can be extended to a  large class of Markov processes.
However, in
most cases the transition densities of Feller processes cannot be
expressed by elementary functions and, hence, in order to compute
functional integrals in Feynman-Kac formulae  we need to approximate them. This gives rise
to Feynman formulae.

A \emph{Feynman formula} is a representation of the solution of an
initial (or initial--boundary) value problem for an evolution
equation (or, equivalently, a representation of the semigroup
resolving the problem) as a limit of $n$-fold iterated integrals of
some elementary functions, when $n\to\infty$. Obviously, the
iterated integrals in a Feynman formula for some problem give
approximations for a functional integral in the Feynman-Kac formula
representing the solution of the  problem. These approximations
can be used for direct calculations and simulations.

The notion of  a Feynman formula has been introduced in
\cite{STT} and the method to obtain Feynman formulae for
evolutionary equations has been developed in a series of papers
\cite{STT}--\cite{SWW5}.  Recently, this method has been
successfully applied to obtain Feynman formulae for different
classes of problems for evolutionary equations on different
geometric structures, see, e.g. \cite{MZ2}--\cite{BGS1},
\cite{Obr1}, \cite{OST}, \cite{SSham} and also to construct some surface
measures on infinite dimensional manifolds (see \cite{SWW1}--\cite{Telyat1}). This
method is based on Chernoff's theorem (see \cite{Ch2} and
\cite{STT} for the version used here), which is a
generalization of the well-known Trotter formula. Trotter's formula
has been used to justify Feynman's heuristic result for
Schr\"{o}dinger equations with a potential, e.g.\
\cite{Nelson}, and to prove the classical Feynman-Kac formula
mentioned earlier.

By Chernoff's theorem a strongly continuous semigroup
$(T_t)_{t\ge0}$ on a Banach space can be represented as a strong
limit: $T_t=\lim_{n\to\infty}[F(t/n)]^n$ where $F(t)$ is an
operator-valued function satisfying certain conditions (see Theorem
\ref{Chernoff} for details). This equality is called a \emph{Feynman
formula} for the semigroup $(T_t)_{t\ge0}$. We call this  Feynman
formula a \emph{Lagrangian Feynman formula}, if the $F(t)$, $t>0$,
are integral operators with elementary kernels; if the $F(t)$ are
$\Psi$DOs, we speak of \emph{Hamiltonian Feynman formulae}. In
particular, we obtain a Hamiltonian Feynman formula for a semigroup
$T_t\equiv e^{-tH(\cdot,D)}$ generated by a $\Psi$DO $-H(\cdot,D)$
with the symbol $-H(q,p)$ if
$$
    e^{-tH(\cdot,D)}= \lim_{n\to\infty}\big[e^{-\frac{t}{n}H}(\cdot,D)\big]^n,
$$
where $e^{-\frac{t}{n}H}(\cdot,D)$ is the $\Psi$DO with the symbol
${e^{-\frac{t}{n}H(q,p)}}$. Note that, in general
$e^{-\frac{t}{n}H}(\cdot,D)$ is not a semigroup and that
$\lambda_t(q,p)\ne e^{-tH(q,p)}$.

Our terminology is inspired by the fact that a Lagrangian Feynman
formula gives approximations to a functional integral over a set of
paths in the configuration space of a system (whose evolution is
described by the semigroup $(T_t)_{t \ge 0}$), while a Hamiltonian
Feynman formula corresponds to a functional (Hamiltonian Feynman path) integral over a set of
paths in the phase space of  some system (cf. \cite{BBSchS}).  The corresponding Hamiltonian Feynman formula
gives rise to a Hamiltonian Feynman path integral also in the case
$T_t=e^{it{H(q,D)}}$ (see \cite{STT}).

In this note we prove some Hamiltonian and Lagrangian Feynman
formulae for  semigroups associated with Feller processes and for perturbations of such semigroups. Several
results of the paper have been announced in \cite{BShS}. The
paper is organized as follows. Section 2 contains notation
and some preliminaries; in particular, Chernoff's theorem is
formulated and the notion of Chernoff equivalence is introduced. In
Section 3 we prove a Hamiltonian Feynman formula for a  class
of semigroups associated with Feller processes.
In Section 4 we obtain a Lagrangian Feynman formula  for a multiplicative perturbation of a Feller
semigroup by  a  function $a(\cdot)$ which is continuous, positive, bounded and bounded away from zero.
 Note, that analogous Lagrangian Feynman formulas  have been proved  for some diffusion
 processes in \cite{BGS2} and have been presented for  the Cauchy process in \cite{BShS}.  In
 Section 5 we consider gradient and bounded  Schr\"{o}\-din\-ger
 perturbations of Feller semigroups and obtain some Hamiltonian and
 Lagrangian Feynman formulae for them.

\section{\bfseries Notations and preliminaries}
 Let $\cont^\infty_c(\mathds{R}^d)$ be a set of infinitely
%many times continuously
differentiable  functions on $\mathds{R}^d$ with compact support and
$S(\real^d)$ be the Schwartz space of rapidly decreasing functions.
Let us also consider a space $\cont_\infty(\mathds{R}^d)$ of all
continuous functions vanishing at infinity. It is a Banach space
with the norm $\|f\|_\infty=\sup_{x\in \mathds{R}^d}|f(x)|$. Write
for the norm in $C^k_\infty(\rn)$, the space of $k$ times
continuously differentiable functions which vanish (with all their
derivatives) at infinity,
$$
    \nnorm u_{(k)} := \sum_{|\alpha| \leq k}\nnorm{\partial^\alpha u}_\infty
$$
where $\alpha \in \nat_0^n$, $\partial^\alpha =
\partial^{|\alpha|}/\partial x_1^{\alpha_1}\cdots\partial
x_n^{\alpha_n}$, and $|\alpha| = \alpha_1 +\ldots+\alpha_n$.

We use the following notations for the   Fourier transform and its inverse:
$$\widehat{f}(p)=({2\pi})^{-d/2}\intl_{\real^d}e^{-ip\cdot q}f(q)dq \quad\text{and}\quad
\mathcal{F}^{-1}[f](q)=({2\pi})^{-d/2}\intl_{\real^d}e^{ip\cdot q}f(p)dp.$$

\subsection{Negative definite functions.}

 Negative definite functions have been introduced by I.J.\ Sch\"onberg in
connection with isometric embeddings of metric spaces into a Hilbert
space. His original definition is the following.
\begin{definition}\label{d2}
    A function $\psi : \rn \to \comp$ is called \emph{negative definite}
    if for any $m\in\nat$ and all $p_1, \ldots, p_m \in\rn$ the $m\times m$ matrix
$
    \left( \psi(p_j) + \overline{\psi(p_k)} - \psi(p_j-p_k)
    \right)_{j,k=1,\ldots, m}
$ is positive hermitian, i.e., if for all $\lambda_1, \ldots,
\lambda_m \in \comp$
$$
    \sum_{j,k=1}^m \left(\psi(p_j) + \overline{\psi(p_k)} - \psi(p_j-p_k)\right) \,\lambda_j \overline{\lambda_k} \geq 0 .
$$
\end{definition}
 A \emph{negative} definite function is NOT the negative of a
\emph{positive definite} function.  Recall that a function $u:\real^d\to\comp$ is called \emph{positive definite} if
for any choice of $k\in\mathds{N}$ and vectors $p_1,\ldots ,p_k\in\real^d$ the matrix $(u(p_i-p_j))_{i,j=1,\ldots ,k}$ is positive Hermitian, i.e. for all $\lambda_1,\ldots ,\lambda_k\in\comp$ we have $\sum_{i,j=1}^k u(p_i-p_j)\lambda_i\overline{\lambda}_j\ge0$.

\begin{corollary}\label{cor}
If $u:\real^d\to\comp$ is a positive definite function, then the
function $[p\mapsto u(0)-u(p)]$ is negative definite.
\end{corollary}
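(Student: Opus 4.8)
The plan is to verify Sch\"onberg's defining inequality from Definition \ref{d2} for $\psi(p):=u(0)-u(p)$ directly, reducing it to the positive-definiteness of $u$ applied to a cleverly augmented system of points. First I would compute the entries appearing in Definition \ref{d2}. For arbitrary $p_1,\dots,p_m\in\rn$, substituting $\psi=u(0)-u(\cdot)$ gives
$$
    \psi(p_j)+\overline{\psi(p_k)}-\psi(p_j-p_k)=\overline{u(0)}-u(p_j)-\overline{u(p_k)}+u(p_j-p_k).
$$
I would then record the two elementary consequences of positive-definiteness of $u$ that are needed: since the matrices in question are positive Hermitian, the diagonal entry $u(0)$ is real (taking $m=1$), and hermiticity of the $2\times2$ block gives $u(-p)=\overline{u(p)}$ (taking $m=2$). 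With these, $\overline{u(0)}=u(0)$ and $\overline{u(p_k)}=u(-p_k)$, so the displayed entry simplifies to $u(0)-u(p_j)-u(-p_k)+u(p_j-p_k)$.

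The crucial step is the choice of an auxiliary point and coefficient. For given $\lambda_1,\dots,\lambda_m\in\comp$, I would adjoin $p_0:=0$ to the list and set $\lambda_0:=-\sum_{j=1}^m\lambda_j$, leaving $\lambda_1,\dots,\lambda_m$ unchanged. Positive-definiteness of $u$ for the $(m+1)$ points $p_0,\dots,p_m$ then yields
$$
    \sum_{j,k=0}^m u(p_j-p_k)\,\lambda_j\overline{\lambda_k}\ge0.
$$

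Expanding this sum and separating off the terms carrying the index $0$ (using $p_0=0$, $\lambda_0=-\sum_j\lambda_j$ and $|\lambda_0|^2=\sum_{j,k}\lambda_j\overline{\lambda_k}$), the row $j=0$ contributes $-\sum_{j,k}u(-p_k)\lambda_j\overline{\lambda_k}$, the column $k=0$ contributes $-\sum_{j,k}u(p_j)\lambda_j\overline{\lambda_k}$, and the corner term $(0,0)$ contributes $u(0)\sum_{j,k}\lambda_j\overline{\lambda_k}$. Collecting everything leaves exactly
$$
    \sum_{j,k=1}^m\bigl(u(0)-u(p_j)-u(-p_k)+u(p_j-p_k)\bigr)\lambda_j\overline{\lambda_k}\ge0,
$$
which by the first paragraph is precisely the negative-definiteness inequality for $\psi$.

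I expect the only non-routine point to be guessing the compensating coefficient $\lambda_0=-\sum_j\lambda_j$: this is exactly what makes the cross-terms with the origin reproduce the linear pieces $-u(p_j)$ and $-u(-p_k)$ while the corner reproduces the constant term $u(0)$. Everything else is a mechanical expansion, so this augmentation trick is really the whole content of the argument.
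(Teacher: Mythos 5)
Your proof is correct and complete: the computation of the entries $\psi(p_j)+\overline{\psi(p_k)}-\psi(p_j-p_k)$, the preliminary facts $u(0)\in\real$ and $u(-p)=\overline{u(p)}$, and the expansion of the positive-definiteness inequality for the augmented system $p_0=0$, $\lambda_0=-\sum_{j=1}^m\lambda_j$ all check out, and the augmentation with the compensating coefficient is indeed the whole content of the argument. Note that the paper itself gives no proof of this corollary (it is stated as a known fact, with the standard theory available in the cited monograph of Berg and Forst), and your argument is precisely the classical one found there, so you have simply supplied the omitted standard proof.
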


The deeper connection between positive definite and negative
definite functions can be seen from the following Theorem \ref{s3}
which also justifies the definition of continuous negative definite
functions through the L\'evy-Khintchine formula:
\begin{definition}
A function  $\psi : \rn \to \comp$ is called  a \emph{continuous
negative definite function} if $\psi$ is given by the
\emph{L\'evy-Khintchine formula}
\begin{equation}\label{g16}
    \psi(p)
    = a +i\ell\cdot p + p\cdot Q p
      + \intl_{y\neq 0}
        \left(1-e^{iy\cdot p} + \frac{iy\cdot p}{1+|y|^2}
        \right)\,N(dy).
\end{equation}
 The tuple $(a,\ell,Q,N)$ consisting of $a\in \mathds{R}^+$,
$\ell\in\rn$, a positive semidefinite matrix
$Q\in\mathds{R}^{d\times d}$ and a Radon measure $N$ on
$\rn\setminus\{0\}$ with $\int_{y\neq 0}
|y|^2(1+|y|^2)^{-1}\,N(dy)<\infty$ is called \emph{L\'evy
characteristics (of $\psi$)}. The measure $N$ is often called
\emph{L\'evy measure}.
\end{definition}
Obviously, the L\'evy characteristics are uniquely determined by  $\psi$---and vice versa.

\begin{theorem}\label{s3}
    For $\psi:\rn\to\comp$ the following properties are equivalent:
    \begin{compactenum}[\upshape\indent (a)]
    \item
     $\psi$ is continuous and negative definite in the sense of
                    Definition \ref{d2};
    \item $\psi$ is given by the L\'evy-Khintchine formula \emph{(\ref{g16})};
    \item $\displaystyle\psi(0)\geq 0$ and $e^{-t\psi}$ is for every $t>0$
                    continuous and positive definite.
    \end{compactenum}
\end{theorem}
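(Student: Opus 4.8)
The plan is to prove the equivalence of the three characterizations of continuous negative definite functions, Theorem \ref{s3}, by establishing the cycle of implications (a) $\Rightarrow$ (c) $\Rightarrow$ (b) $\Rightarrow$ (a). The implication (b) $\Rightarrow$ (a) is essentially a computation: assuming $\psi$ is given by the L\'evy--Khintchine representation \eqref{g16}, I would verify directly from Definition \ref{d2} that $\psi$ is negative definite. Continuity follows from dominated convergence using the integrability condition $\int_{y\neq0}|y|^2(1+|y|^2)^{-1}\,N(dy)<\infty$, and for the negative-definiteness one checks each summand of the L\'evy--Khintchine formula separately. The constant $a\geq 0$, the linear part $i\ell\cdot p$, and the quadratic form $p\cdot Qp$ are each readily seen to give a positive hermitian matrix in the sense of Definition \ref{d2} (the linear term in fact contributes zero to the relevant quadratic form, and positive semidefiniteness of $Q$ handles the Gaussian part). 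The integral term is handled by observing that, for each fixed $y$, the function $p\mapsto 1-e^{iy\cdot p}$ plus the compensating linear drift is negative definite because $p\mapsto e^{iy\cdot p}$ is positive definite with value $1$ at the origin, so Corollary \ref{cor} applies; since a pointwise limit and a positive integral mixture of negative definite functions is again negative definite, the claim follows.

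For (a) $\Rightarrow$ (c), I would first show that $\psi(0)\geq 0$. Taking $m=1$ and $p_1=0$ in Definition \ref{d2} gives $\psi(0)+\overline{\psi(0)}-\psi(0)\geq 0$, i.e.\ $\overline{\psi(0)}\geq 0$, which forces $\psi(0)$ to be real and nonnegative. The substantive part is to show that $e^{-t\psi}$ is positive definite for every $t>0$. The standard route is Schoenberg's theorem: one shows that the matrix $(\psi(p_j)+\overline{\psi(p_k)}-\psi(p_j-p_k))_{j,k}$ being positive hermitian for all choices of points is equivalent to $(e^{-t\psi(p_j-p_k)})_{j,k}$ being positive hermitian for every $t>0$. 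To see this I would fix points $p_1,\ldots,p_m$ and scalars $\lambda_1,\ldots,\lambda_m$, and study $\sum_{j,k} e^{-t\psi(p_j-p_k)}\lambda_j\overline{\lambda_k}$. The key algebraic trick is to enlarge the configuration by adjoining the point $0$ and to rewrite $\psi(p_j-p_k)$ in terms of the quantities $\psi(p_j)+\overline{\psi(p_k)}-\psi(p_j-p_k)$, absorbing the ``diagonal'' terms $e^{-t\psi(p_j)}\lambda_j$ into rescaled coefficients. This reduces positive definiteness of $e^{-t\psi}$ to the statement that the exponential of a positive hermitian matrix (entrywise, i.e.\ the Schur/Hadamard exponential) is again positive semidefinite, which follows from the Schur product theorem applied termwise to the power series of the exponential.

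The remaining implication (c) $\Rightarrow$ (b) is the deepest, as it is precisely the L\'evy--Khintchine representation theorem. Here I would invoke Bochner's theorem: for each $t>0$, positive definiteness and continuity of $e^{-t\psi}$ (normalized so that $e^{-t\psi(0)}\le 1$) yield a finite measure $\mu_t$ on $\rn$ with $e^{-t\psi(p)}=\widehat{\mu_t}(p)$ up to normalization. The family $\{\mu_t\}_{t>0}$ then satisfies the convolution semigroup property $\mu_{s+t}=\mu_s*\mu_t$, because $e^{-(s+t)\psi}=e^{-s\psi}e^{-t\psi}$ and the Fourier transform turns products into convolutions. Extracting the generator of this convolution semigroup as $t\downarrow 0$, via a vague-limit argument on the measures $t^{-1}(\mu_t-\varepsilon_0)$, produces exactly the L\'evy characteristics $(a,\ell,Q,N)$ and the representation \eqref{g16}. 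The main obstacle is this last step: controlling the limit $t^{-1}(e^{-t\psi(p)}-1)\to -\psi(p)$ uniformly enough to guarantee that the vague limit of the compensated measures exists, has the correct integrability $\int|y|^2(1+|y|^2)^{-1}\,N(dy)<\infty$, and splits cleanly into the drift, Gaussian, and jump parts. Since this is a classical result, I would either carry out the compactness and tightness argument in detail or cite the standard references (e.g.\ Berg--Forst or Jacob) for the full L\'evy--Khintchine construction.
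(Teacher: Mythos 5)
The paper offers no proof of Theorem \ref{s3} at all---it simply refers the reader to Berg--Forst (II.\S 7) and Jacob's monograph---and your sketch is precisely the classical argument found in those references: direct verification of each L\'evy--Khintchine summand for (b)$\Rightarrow$(a), Schoenberg's device of absorbing the diagonal factors $e^{-t\psi(p_j)}$ into the coefficients and applying the Schur product theorem to the entrywise exponential for (a)$\Rightarrow$(c), and Bochner's theorem together with the convolution-semigroup and vague-limit construction of the characteristics $(a,\ell,Q,N)$ for (c)$\Rightarrow$(b). Your outline is correct, and the single step you defer to the literature---the full L\'evy--Khintchine construction in (c)$\Rightarrow$(b)---is exactly what the paper itself delegates, so the proposal matches the paper's (cited) proof in substance.
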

 A proof of Theorem \ref{s3} can be found, e.g. in the monographs by Jacob \cite{NJ} or by
Berg and Forst \cite{ber-for}(II.\S7).  All continuous positive definite functions are characterized by Bochner's Theorem.
\begin{theorem}[Bochner]\label{s4}
    A function $\phi:\rn\to\comp$ is continuous and positive definite if, and only if,
    it is the Fourier transform of a bounded Radon measure $\mu\in\mathcal M_b^+(\rn)$, i.e.,
$$
    \phi(p) = \widehat\mu(p) := (2\pi)^{-d/2}\int_{\real^d} e^{-ip\cdot q}\,\mu(dq).
$$
\end{theorem}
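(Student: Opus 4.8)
I would treat the two implications separately, the sufficiency being routine and the converse---the construction of the measure---being where the real work lies. For the \emph{sufficiency}, suppose $\phi=\widehat\mu$ with $\mu\in\mathcal M_b^+(\rn)$. Continuity of $\phi$ is immediate from dominated convergence, since $|e^{-ip\cdot q}|=1$ and $\mu$ is finite. For positive definiteness I would substitute the integral representation into the defining quadratic form and interchange the finite sum with the integral: for $p_1,\dots,p_k\in\rn$ and $\lambda_1,\dots,\lambda_k\in\comp$,
$$
   \sum_{i,j=1}^k \phi(p_i-p_j)\lambda_i\overline{\lambda_j}
   = (2\pi)^{-d/2}\intl_{\real^d}\Big|\sum_{i=1}^k\lambda_i e^{-ip_i\cdot q}\Big|^2\,\mu(dq)\ge0,
$$
where I used $e^{-i(p_i-p_j)\cdot q}=e^{-ip_i\cdot q}\,\overline{e^{-ip_j\cdot q}}$ and the positivity of $\mu$.

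For the \emph{necessity}, assume $\phi$ is continuous and positive definite. First I would record the elementary facts coming from the $2\times2$ positivity condition at the nodes $0$ and $p$, namely $\phi(0)\ge0$, $\phi(-p)=\overline{\phi(p)}$ and $|\phi(p)|\le\phi(0)$, so that $\phi$ is bounded. The core idea is to regularize and invert: for $\eps>0$ put $g_\eps=\mathcal F^{-1}\big[\phi\,e^{-\eps|\cdot|^2/2}\big]$, i.e.
$$
    g_\eps(q)=(2\pi)^{-d/2}\intl_{\real^d}\phi(p)\,e^{-\eps|p|^2/2}\,e^{ip\cdot q}\,dp,
$$
which converges because the Gaussian factor makes the integrand integrable. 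The crucial claim is that $g_\eps\ge0$. To see this I would first upgrade the discrete positive definiteness to its continuous form $\intl_{\real^d}\intl_{\real^d}\phi(p-p')\,\lambda(p)\overline{\lambda(p')}\,dp\,dp'\ge0$, valid for $\lambda\in L^1(\rn)$, by approximating the double integral by Riemann sums, each of which is exactly a positive Hermitian form in the sampled values of $\lambda$. Applying this with $\lambda(p)=e^{-\eps|p|^2/4}e^{ip\cdot q}$ and evaluating the resulting Gaussian integral identifies the nonnegative double integral, up to a positive constant and a harmless rescaling of $\eps$, with $g_\eps(q)$; hence $g_\eps\ge0$.

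Finally I would assemble the limit. Set $\mu_\eps(dq)=g_\eps(q)\,dq$; these are finite positive measures (finiteness following from $g_\eps\ge0$ via a monotone approximate-identity argument), Fourier inversion gives $\widehat{\mu_\eps}=\phi\,e^{-\eps|\cdot|^2/2}$, and the total masses $\mu_\eps(\rn)=(2\pi)^{d/2}\phi(0)$ are bounded uniformly in $\eps$. The main obstacle is the passage to the limit, where I must prevent mass from escaping to infinity. This is exactly where the \emph{continuity of $\phi$ at the origin} enters: together with $\widehat{\mu_\eps}\to\phi$ pointwise it forces the family $\{\mu_\eps\}$ to be tight (a L\'evy-continuity-theorem estimate controlling $\mu_\eps(\{|q|>R\})$ by an average of $\widehat{\mu_\eps}(0)-\Re\widehat{\mu_\eps}(p)$ over small $p$), after which Prokhorov's theorem yields a weakly convergent subsequence $\mu_\eps\to\mu\in\mathcal M_b^+(\rn)$. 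Since for each fixed $p$ the map $q\mapsto e^{-ip\cdot q}$ is bounded and continuous, weak convergence gives $\widehat{\mu_\eps}(p)\to\widehat\mu(p)$, while $\widehat{\mu_\eps}(p)=\phi(p)e^{-\eps|p|^2/2}\to\phi(p)$; matching the two limits yields $\phi=\widehat\mu$. I expect the tightness step to be the genuinely delicate point, since it is the only place using continuity (rather than mere boundedness) of $\phi$ and it is what guarantees that the candidate measure is not defective.
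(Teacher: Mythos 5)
The paper offers no proof of this statement at all: Bochner's theorem is quoted as a classical result (the surrounding text points to the monographs of Jacob and Berg--Forst for this circle of facts), so there is no internal argument to compare yours against, and any correct proof is automatically ``a different route.'' On its own merits your sketch is correct and is essentially the standard Gaussian-regularization proof one finds in Berg--Forst. The easy direction is fine: the quadratic form collapses to $(2\pi)^{-d/2}\int_{\real^d}\bigl|\sum_i\lambda_i e^{-ip_i\cdot q}\bigr|^2\,\mu(dq)\ge 0$. In the converse, each step you flag goes through: the $2\times 2$ minors give $\phi(0)\ge 0$, $\phi(-p)=\overline{\phi(p)}$ and $|\phi|\le\phi(0)$; the Riemann-sum upgrade to $\int\int\phi(p-p')\lambda(p)\overline{\lambda(p')}\,dp\,dp'\ge 0$ is legitimate for your continuous, Gaussian-decaying $\lambda$ (truncate to a cube, use uniform continuity of $\phi$ on compacts, remove the truncation by dominated convergence since $\phi$ is bounded); completing the square with $\lambda(p)=e^{-\eps|p|^2/4}e^{ip\cdot q}$ yields exactly $(2\pi/\eps)^{d/2}(2\pi)^{d/2}\,g_{\eps/4}(q)$, confirming $g_\eps\ge 0$; and testing $g_\eps$ against $e^{-\delta|q|^2/2}$ and letting $\delta\to 0$ gives $\mu_\eps(\rn)=(2\pi)^{d/2}\phi(0)$ (note that the uniform bound needs only $|\phi|\le\phi(0)$ via Fatou, while the exact value is a second, independent use of continuity at the origin). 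Your tightness step is the right one and works uniformly in $\eps\in(0,1]$, since $\phi(0)-\Re\bigl(\phi(p)e^{-\eps|p|^2/2}\bigr)\le\bigl(\phi(0)-\Re\phi(p)\bigr)+\tfrac12\phi(0)\,\eps|p|^2$, so the truncation inequality controlling $\mu_\eps(\{|q|>R\})$ by small-$p$ averages of $\widehat{\mu_\eps}(0)-\Re\widehat{\mu_\eps}(p)$ can be made uniformly small; Prokhorov and the pointwise limit $\widehat{\mu_\eps}\to\phi$ then identify $\phi=\widehat\mu$ (and since the Fourier transform determines the measure, every subsequential limit agrees, so in fact the whole family converges, though a subsequence suffices for the theorem). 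One could alternatively finish with vague (Helly/Banach--Alaoglu) compactness plus your continuity-at-zero estimate to rule out mass defect, but that is the same mechanism in different clothing; nothing is missing from your outline.
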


From Definition \ref{d2} it is not hard to see that a negative
definite function has positive real part $\Re\psi\geq 0$, satisfies
$\overline{\psi(p)} = \psi(-p)$ and that $\sqrt{|\psi(\bu)|}$ is
subadditive, i.e.,
$$
    \sqrt{|\psi(p_1+p_2)|}
    \leq \sqrt{|\psi(p_1)|} + \sqrt{|\psi(p_2)|}, \qquad p_1, p_2\in\rn.
$$
If $\psi$ is continuous, repeated applications of the subadditivity
estimate yield the following growth bound
\begin{equation}\label{g25}
    |\psi(p)| \leq 2\,\sup_{|\eta|\leq 1}|\psi(\eta)|\,\left(1+|p|^2\right),
    \qquad p\in\rn.
\end{equation}

\subsection{Feller and L\'{e}vy semigroups and their generators}
A \emph{Feller process} $\Xt$ with a state space $\rn$ is a strong
Markov process whose associated operator semigroup $\Tt$,
$$
    T_tu(x) = \Ee^x\left[u(X_t)\right],
    \qquad u\in C_\infty(\rn), \; t\geq 0,\; x\in\rn,
$$
 enjoys the \emph{Feller property}, i.e., it is a strongly continuous positivity preserving contraction
semigroup %%%$(T_t)_{t\ge0}$
on  the space $C_\infty(\real^d)$.  The semigroup $\Tt$ is said to be a
\emph{Feller semigroup}.

%%%That is, $\Tt$ is a one-parameter semigroup
%%%of contraction operators $T_t : C_\infty(\rn) \to C_\infty(\rn)$
%%%which is strongly continuous: $\lim_{t\to 0} \nnorm{T_t u -
%%%u}_\infty = 0$ and has the \emph{sub-Markov property}: $0\leq T_tu
%%%\leq 1$ whenever $0\leq u\leq 1$.

The (\emph{infinitesimal}) \emph{generator} $(A,D(A))$ of
the semigroup or the process is given by the strong limit
$$
    Au := \lim_{t\to 0}\frac{T_t u - u}{t}
$$
on the set $D(A)\subset C_\infty(\rn)$ of those $u\in
C_\infty(\rn)$ for which the above limit exists w.r.t.\ the
$\sup$-norm. We will call $(A,D(A))$ a \emph{Feller
generator} for short.

Before we proceed with general Feller semigroups it is instructive
to have a brief look at L\'evy processes (and
convolution semigroups) which are a particular subclass of
Feller processes. Our standard reference for L\'evy processes is the
monograph by K.\ Sato \cite{Sato}. A \emph{L\'evy process} $\Yt$ is a
stochastically continuous random process with stationary and
independent increments. The Fourier transform of a L\'evy process
has a particularly simple structure,
\begin{equation}\label{g15}
    \Ee^x\left[ e^{ip\cdot(Y_t - x)}\right]
    = \Ee^0\left[ e^{ip\cdot Y_t}\right]
    = e^{-t\psi(p)},
\end{equation}
where $\psi : \rn \to \comp$ is the \emph{characteristic exponent}
which is a \emph{continuous negative definite function}, i.e. $\psi$
is given by the \emph{L\'evy-Khintchine formula} \eqref{g16}.
 Since $\Yt$ is a Markov process both
(\ref{g15}) and (\ref{g16}) characterize the finite dimensional
distributions of $\Yt$ and, hence, the process itself.

A L\'evy process is spatially homogeneous. Therefore, the associated
semigroup is of convolution type,
$$
    S_t u(x)
    = \Ee^x\left[u(Y_t)\right]
    = \Ee^0\left[u(Y_t+x)\right]
    = \intl_{\mathbb{R}^d} u(x+y)\,\Pp^0(Y_t\in dy)
    = u* \tilde\mu_t(dy),
$$
$\tilde\mu_t(dy) = \Pp^0(Y_t\in -dy)$, and a short direct
calculation shows that $\St$ is indeed a Feller semigroup with
infinitesimal generator
\begin{equation}\label{g110}
    Bu(x) = -\psi(D)u(x)
    := -(2\pi)^{-n/2} \intl_{\mathbb{R}^d} \psi(p) \, \widehat u(p)\,e^{ix\cdot p}\,dp,
    \qquad u\in C_c^\infty(\rn).
\end{equation}
One can use the estimate (\ref{g25}) to show that integrals in
(\ref{g110}) are  convergent.

 The operator $\psi(D)$ is a
first example of a so-called \emph{pseudo differential operator}
with  the \emph{symbol} $ \psi(p)$. Since $\psi$ does not depend on
$x$ the operator has constant ``coefficients''. Notice that the
symbol $\psi$ is just the characteristic exponent of the process
$\Yt$. This shows that

\bigskip\noindent
\emph{every L\'evy process is generated by a pseudo differential
operator $-\psi(D)$ with the  symbol $-\psi(p)$  where  $\psi $ is
the characteristic exponent of the process. Conversely, every pseudo
differential operator $-\psi(D)$ with the symbol $-\psi(p)$,  where
$\psi$ is a continuous negative definite  function, i.e., given by
the L\'evy-Khintchine formula {\upshape (\ref{g16})}, is the
generator of a L\'evy process.}

\bigskip
%\bigskip
Let us return to the general situation. It is not hard to see
(cf.~\cite{eth-kur},p.\ 165, Theorem 2.2\,(b)) that Feller generators
satisfy the so-called \emph{positive maximum principle}
$$
    \textrm{if}\qquad
    u\in D(A),\quad \sup_{x\in\rn} u(x) = u(x_0)\geq 0
    \qquad\textrm{then}\qquad
    Au(x_0)\leq 0.
\leqno{\textrm{(PMP)}}$$ Extending earlier work of W.\ von
Waldenfels ~\cite{wald61,wald64} Ph.\ Courr\`ege showed in \cite{cour66}, see also \cite{bon-cour-pri68}, the
following structure result for operators satisfying the positive
maximum principle. We formulate his theorem only for Feller
generators.
\begin{theorem}[Courr\`ege]\label{s1}
    Let $(A,{D}(A))$ be a Feller generator
     such that $C_c^\infty(\rn)\subset {D}(A)$.
     Then $A\big|_{C_c^\infty(\rn)}$ is a pseudo differential operator,
\begin{equation}\label{g115}
    Au(q)
    = -H(q,D)u(q)
    =- (2\pi)^{-n/2}\intl_{\mathbb{R}^d} H(q,p)\, \widehat u(p)\,e^{ip\cdot q}\,dp,
    \qquad u\in C_c^\infty(\rn),
\end{equation}
with the symbol $H:\rn\times\rn\to\comp$ which is measurable,
locally bounded in both variables $(q,p)$, and satisfies for fixed
$q$ the following L\'evy-Khintchine representation
\begin{equation}\label{g116}
    H(q,p)
    = a(q) +i\ell(q)\cdot p + p\cdot Q(q)p
      + \intl_{y\neq 0}
        \left(1-e^{iy\cdot p} + \frac{iy\cdot p}{1+|y|^2} \right)\,N(q,dy),
\end{equation}
where $(a(q),\ell(q),Q(q),N(q,\bu))$ is for each $q\in\rn$ the
L\'evy characteristics of $-H(q,\bu)$. \end{theorem}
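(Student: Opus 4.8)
The plan is to fix $q\in\rn$ and to analyse the linear functional $L_q\colon u\mapsto Au(q)$ on $C_c^\infty(\rn)$; the whole argument is driven by the positive maximum principle (PMP). Formally the symbol is read off by testing $A$ against the oscillating exponentials $e_p(x):=e^{ip\cdot x}$ through the relation $Ae_p(q)=-H(q,p)\,e^{ip\cdot q}$, after which the representation \eqref{g115} is nothing but Fourier inversion $u(x)=(2\pi)^{-d/2}\int\widehat u(p)e^{ip\cdot x}\,dp$ combined with an interchange of $A$ and the integral. Since $e_p\notin C_c^\infty(\rn)$, the first task is to make $Ae_p(q)$ meaningful: I would fix a cut-off $\chi\in C_c^\infty(\rn)$ with $\chi\equiv 1$ on a neighbourhood of $q$, define the symbol via $A(\chi e_p)(q)$, and then exploit the locality inherent in (PMP) to show that the result is independent of $\chi$ up to terms that do not contribute at the point $q$. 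A preliminary reduction, which I would carry out first, is to show that $L_q$ extends to a continuous functional on a suitable $C^k$-space; this follows from the fact that $A$ is a closed operator with $C_c^\infty(\rn)\subset D(A)$ via a closed-graph argument.

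Next I would extract the L\'evy characteristics by a second-order Taylor decomposition at $q$. Writing a test function as the sum of its value, its linear and its quadratic part at $q$, plus a remainder vanishing to second order, and applying $L_q$ term by term, produces the four ingredients of \eqref{g116}: the killing rate $a(q)=-A(\chi)(q)$, the drift $\ell(q)$, the diffusion matrix $Q(q)$, and a remainder functional acting on the increment $y\mapsto u(q+y)$. The role of (PMP) is to force all the signs: testing $\chi$, which attains a nonnegative maximum near $q$, gives $a(q)\ge 0$; testing suitable quadratic bumps shows that $Q(q)$ is symmetric and positive semidefinite; and it guarantees positivity of the remainder.

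The main obstacle is the non-local part. Restricted to increments that vanish together with their first and second derivatives at $y=0$, the functional $L_q$ is a \emph{positive} linear functional, so a Riesz-type representation yields a positive Radon measure $N(q,dy)$ on $\rn\setminus\{0\}$ representing it. The delicate step is the integrability $\int_{y\neq 0}\frac{|y|^2}{1+|y|^2}\,N(q,dy)<\infty$, which I would obtain by testing $L_q$ against a fixed family of bump functions that behave like $|y|^2$ near the origin and like a constant at infinity, using the local boundedness of $L_q$ established above. Re-assembling the four pieces gives exactly \eqref{g116}; by Theorem \ref{s3} the function $p\mapsto H(q,p)$ is then continuous and negative definite, and the growth estimate \eqref{g25} guarantees that the integral in \eqref{g115} converges and that the interchange of $A$ with the Fourier inversion integral is legitimate for $u\in C_c^\infty(\rn)$.

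Finally I would verify that $(q,p)\mapsto H(q,p)$ is measurable and locally bounded. Each characteristic $a(q),\ell(q),Q(q),N(q,\cdot)$ has been produced pointwise as a limit of expressions depending measurably on $q$ (since $Au$ is continuous in $q$ for $u\in C_c^\infty(\rn)$), and a uniform-in-$q$ version of the bump-function estimates gives local boundedness in both variables. The two steps I expect to be genuinely hard are the identification of the non-local part as an honest positive measure with the correct L\'evy integrability, and the bookkeeping needed to show that the cut-off construction of the symbol is independent of $\chi$.
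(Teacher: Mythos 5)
The paper does not actually prove this statement: it is Courr\`ege's structure theorem, quoted with a reference to \cite{cour66} (see also \cite{bon-cour-pri68}), so your attempt can only be measured against the classical proof. Your overall architecture does follow that classical argument --- positive maximum principle, a closed-graph/local-boundedness reduction on $C_c^\infty$ supported in a fixed compact set, a second-order Taylor splitting to extract $a(q)$, $\ell(q)$, $Q(q)$, a Riesz-type representation of the almost-positive remainder functional by a measure $N(q,dy)$, and bump-function estimates for the integrability of $|y|^2/(1+|y|^2)$ against $N(q,dy)$.

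There is, however, one genuine error in your outline: the claimed $\chi$-independence of the cut-off construction fails, because operators satisfying (PMP) are \emph{not} local. If $\chi_1,\chi_2$ are two cut-offs equal to $1$ near $q$, the integro-differential representation \eqref{g35} gives
\begin{equation*}
A(\chi_1 e_p)(q)-A(\chi_2 e_p)(q)
= e^{ip\cdot q}\intl_{y\neq 0}\bigl(\chi_1-\chi_2\bigr)(q+y)\,e^{ip\cdot y}\,N(q,dy),
\end{equation*}
which is in general nonzero: the L\'evy part sees the region where the cut-offs differ, and it does contribute at the point $q$. For the same reason your identification $a(q)=-A\chi(q)$ is off by the term $\int_{y\neq 0}(1-\chi(q+y))\,N(q,dy)$. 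Both defects can be repaired by an exhausting sequence $\chi_n\uparrow 1$ and dominated convergence (using that $N(q,\cdot)$ is finite away from the origin), but the cleaner route --- and the one taken in the literature the paper cites --- reverses your order of steps: first establish the integro-differential form \eqref{g35} of $A$ from (PMP) via your Taylor/Riesz steps, read off the characteristics $(a(q),\ell(q),Q(q),N(q,\cdot))$, \emph{define} $H(q,p)$ by the L\'evy-Khintchine formula \eqref{g116}, and only then derive \eqref{g115} by inserting the Fourier inversion of $u$ and applying Fubini, justified by the elementary bound $\bigl|e^{ip\cdot y}-1-\tfrac{ip\cdot y}{1+|y|^2}\bigr|\leq C\,\tfrac{|y|^2}{1+|y|^2}\,(1+|p|^2)$ together with the growth estimate \eqref{g25} and the rapid decay of $\widehat u$. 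With that reordering the remaining items in your sketch (signs of $a(q)$ and $Q(q)$ from (PMP) with scaled quadratic bumps, measurability of $q\mapsto H(q,p)$ from continuity of $q\mapsto Au(q)$, local boundedness from uniform bump estimates) are sound and standard.
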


Observe that (\ref{g116}) automatically implies the continuity of
$p\mapsto H(q,p)$ for each $q\in\rn$.

Let $H(q,D)$ be a pseudo differential operator with the symbol $H(q,
p)$ as in Theorem \ref{s1}. Since $H(q, p)$ is represented by the
L\'evy-Khintchine type formula (\ref{g116}) we can use Fourier
inversion in (\ref{g115}) and find that the integro-differential
operator
\begin{equation}\label{g35}
\begin{split}
    A\ffi(q)
    &=
    -a(q)\ffi(q) + \ell(q)\cdot\nabla \ffi(q)
    + \sum_{j,k=1}^d Q^{jk}(x)\partial_j\partial_k \ffi(q)\\
    &\phantom{==}+ \int_{y\neq 0} \left( \ffi(q+y) - \ffi(q)
    - \frac{y\cdot\nabla \ffi(q)}{1+|y|^2}\right)\,N(q,dy)
\end{split}
\end{equation}
extends $\left(-H(\bu,D),C_c^\infty(\rn)\right)$ to the set
$C^2_\infty(\real^n)$. Note that the following Lemma \ref{l2} together
with the integration properties of $N(q,dy)$,
$$\intl_{y\neq 0}
|y|^2/(1+|y|^2)\,N(q,dy) < \infty,$$
 ensure that the integral in
(\ref{g35}) converges. From now on we will use the pseudo
differential representation (\ref{g115}) and the
integro-differential representation (\ref{g35}) simultaneously.

\begin{lemma}\label{l2}
    For all $\ffi\in C_b^2(\rn)$ we have
\begin{equation}\label{g310}
    \left|\ffi(q+y)-\ffi(q)-\frac{y\cdot\nabla \ffi(q)}{1+|y|^2} \right|
    \leq 2\,\frac{|y|^2}{1+|y|^2}\,\nnorm{\ffi}_{(2)}.
\end{equation}
\end{lemma}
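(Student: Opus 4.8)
The plan is to control the expression separately for small and for large jumps $y$, matching the two regimes of the weight $\frac{|y|^2}{1+|y|^2}$, which behaves like $|y|^2$ as $y\to0$ and like the constant $1$ as $|y|\to\infty$. Throughout I would abbreviate $M_1:=\sum_{|\alpha|=1}\nnorm{\partial^\alpha\ffi}_\infty$ and $M_2:=\sum_{|\alpha|=2}\nnorm{\partial^\alpha\ffi}_\infty$, so that $\nnorm{\ffi}_{(2)}=\nnorm{\ffi}_\infty+M_1+M_2$, and I would repeatedly use the elementary inequality $\frac{|y|}{1+|y|^2}\le\tfrac12$ (from $1+|y|^2\ge2|y|$) to tame the compensating gradient term.

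First, for $|y|\le1$ I would add and subtract $y\cdot\nabla\ffi(q)$ and write
\begin{equation*}
\ffi(q+y)-\ffi(q)-\frac{y\cdot\nabla\ffi(q)}{1+|y|^2}
=\big[\ffi(q+y)-\ffi(q)-y\cdot\nabla\ffi(q)\big]+\frac{|y|^2}{1+|y|^2}\,y\cdot\nabla\ffi(q).
\end{equation*}
Taylor's formula to second order bounds the bracket by $\tfrac12|y|^2\,\|D^2\ffi\|$, hence by a multiple of $|y|^2M_2$, while the second summand is at most $\frac{|y|^2}{1+|y|^2}\,|y|\,M_1\le\frac{|y|^2}{1+|y|^2}\,M_1$ since $|y|\le1$. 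Using $|y|^2\le2\frac{|y|^2}{1+|y|^2}$ (because $1+|y|^2\le2$), both contributions are absorbed into $\frac{|y|^2}{1+|y|^2}\nnorm{\ffi}_{(2)}$, comfortably within the claimed bound.

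For $|y|>1$ the Taylor decomposition is useless, since its two pieces grow linearly and quadratically in $|y|$ whereas the target stays bounded; so I would instead estimate $\ffi(q+y)-\ffi(q)$ by the boundedness of $\ffi$ and keep the damping on the gradient term, giving
\begin{equation*}
\left|\ffi(q+y)-\ffi(q)-\frac{y\cdot\nabla\ffi(q)}{1+|y|^2}\right|
\le 2\nnorm{\ffi}_\infty+\frac{|y|}{1+|y|^2}\,M_1
\le 2\nnorm{\ffi}_\infty+\tfrac12 M_1 .
\end{equation*}
Since $\frac{|y|^2}{1+|y|^2}\ge\tfrac12$ in this regime, the target $2\frac{|y|^2}{1+|y|^2}\nnorm{\ffi}_{(2)}$ is at least $\nnorm{\ffi}_{(2)}=\nnorm{\ffi}_\infty+M_1+M_2$, and it remains to compare this with the displayed bound.

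The main obstacle is exactly this last comparison in the transition region $|y|\approx1$: the crude estimate $|\ffi(q+y)-\ffi(q)|\le2\nnorm{\ffi}_\infty$ is wasteful precisely where the weight $\frac{|y|^2}{1+|y|^2}$ is smallest, and a term-by-term comparison loses a constant factor there (it would only force $\nnorm{\ffi}_\infty\le\tfrac12 M_1+M_2$, which is false for, e.g., a wide flat bump). To recover the stated constant $2$ one must use the correlation the naive bound ignores, namely that a small gradient forces small oscillation: adding the complementary estimate $|\ffi(q+y)-\ffi(q)|\le|y|M_1$, or invoking a Landau--Kolmogorov interpolation of the form $M_1^2\lesssim\nnorm{\ffi}_\infty M_2$ (a large first-derivative norm cannot coexist with small zeroth- and second-derivative norms), reconciles the two regimes and closes the gap.
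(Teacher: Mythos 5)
Your argument is essentially correct in outline, but it takes a genuinely different --- and considerably more laborious --- route than the paper, and its final step is asserted rather than executed. The paper needs no case distinction at all: it multiplies the left-hand side by $1+|y|^2$ and regroups,
\begin{equation*}
(1+|y|^2)\Bigl(\ffi(q+y)-\ffi(q)-\tfrac{y\cdot\nabla\ffi(q)}{1+|y|^2}\Bigr)
=\bigl[\ffi(q+y)-\ffi(q)-y\cdot\nabla\ffi(q)\bigr]+|y|^2\bigl[\ffi(q+y)-\ffi(q)\bigr],
\end{equation*}
so the compensating gradient term is absorbed \emph{exactly} into the second-order Taylor remainder; the remainder is at most $\tfrac12|y|^2\bigl(\sum_{j,k}\nnorm{\partial_j\partial_k\ffi}_\infty^2\bigr)^{1/2}$ and the second bracket at most $2|y|^2\nnorm{\ffi}_\infty$, giving $2|y|^2\nnorm{\ffi}_{(2)}$ for \emph{all} $y$ simultaneously, and division by $1+|y|^2$ finishes. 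Your remark that ``for $|y|>1$ the Taylor decomposition is useless'' is precisely what this regrouping refutes: the quadratic Taylor remainder matches the target once the target is multiplied by $1+|y|^2$; only the uncancelled linear gradient piece of \emph{your} small-$y$ decomposition misbehaves at large $|y|$, and the paper's algebra removes it identically. Concerning your transition region: your diagnosis of the defect of the crude bound $2\nnorm{\ffi}_\infty$ is accurate, and your first proposed remedy does in fact close the gap --- writing $\Phi=\nnorm{\ffi}_\infty$ and $w=\tfrac{|y|^2}{1+|y|^2}$, after spending $wM_1$ on the gradient term one needs $\min(2\Phi,|y|M_1)\le 2w\Phi+wM_1$, which holds because for $|y|\ge1$ either $M_1\le\tfrac{2w\Phi}{|y|-w}$ (use the bound $|y|M_1$) or $M_1\ge\tfrac{2\Phi}{|y|^2}$ (use the bound $2\Phi$), and these cases cover all of $[0,\infty)$ exactly because $|y|\le|y|^2=w(1+|y|^2)$ when $|y|\ge1$ --- but you left this verification as a claim, and your Landau--Kolmogorov alternative is both heavier than needed and delicate (the dimensional constants relating $M_1$, $M_2$, $\nnorm{\ffi}_\infty$ in the norm $\nnorm{\cdot}_{(2)}$ on $\rn$ would have to be checked). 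In short: your two-regime proof can be completed along the lines you indicate, but the paper's one-line multiplication by $1+|y|^2$ makes the entire case analysis and interpolation machinery superfluous.
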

\begin{proof}%%%%%{\bfseries Proof:}
    By Taylor's formula we get for all $q,y\in\rn$
\begin{align*}
    \bigg| (1+|y|^2)&\left( \ffi(q+y)-\ffi(q)-\frac{y\cdot\nabla \ffi(q)}{1+|y|^2}\right)\bigg|\\
    &\leq \left|\ffi(q+y)-\ffi(q)-y\cdot\nabla \ffi(q)\right| + |y|^2\left| \ffi(q+y)-\ffi(q)\right|\\
    &\leq \frac 12  \left|\sum_{j,k=1}^d y_jy_k\partial_j\partial_k \ffi(\xi_{q,y})\right|
      + 2|y|^2\nnorm \ffi_\infty\\
    &\leq 2|y|^2\left(\nnorm \ffi_\infty +\sqrt{ \sum_{j,k=1}^d \nnorm{\partial_j\partial_k \ffi}_\infty^2}
          \right)\\
    &\leq 2|y|^2\,\nnorm \ffi_{(2)}.
\qedhere\end{align*}
\end{proof} %%%%%\hfill$\blacksquare$

%%%%%%%%%%%%%%%%%%%%%%%%%%%%%%%%%%%%%%%%%%%%%%%%%%%%%%%%%%%%%%%%%%%%%%%%%

In the sequel we will need also the following Lemma.
\begin{lemma}\label{l1}
    We have
$$
    \frac{|y|^2}{1+|y|^2} = \intl_{\rn} \left(1-\cos(y\cdot p)\right)\, g(p)\,dp,
    \qquad y\in\rn,
$$
    where
$
    g(p) = \frac 12 \int_0^\infty
            (2\pi\lambda)^{-d/2}\,e^{-|p|^2/2\lambda}\,e^{-\lambda/2}\,d\lambda
$
    is integrable and has absolute moments of arbitrary order.
\end{lemma}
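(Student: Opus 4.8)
The plan is to read the right-hand side as an iterated integral in $(\lambda,p)$ and to evaluate it by Tonelli's theorem, which applies without fuss since both $1-\cos(y\cdot p)\geq 0$ and $g(p)\geq 0$. The two ingredients are the elementary identity $\tfrac{|y|^2}{1+|y|^2}=1-\tfrac{1}{1+|y|^2}$ and the pair of representations
\begin{equation*}
  \frac{1}{1+|y|^2}=\frac12\intl_0^\infty e^{-\lambda(1+|y|^2)/2}\,d\lambda
  \qquad\text{and}\qquad
  e^{-\lambda|y|^2/2}=\intl_{\rn}(2\pi\lambda)^{-d/2}e^{-|p|^2/2\lambda}\cos(y\cdot p)\,dp.
\end{equation*}
The first is an elementary exponential integral; the second is the (real part of the) characteristic function of a centred Gaussian with covariance $\lambda\,\id$, the imaginary part vanishing by symmetry of the density in $p$.

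First I would insert $1=\intl_{\rn}(2\pi\lambda)^{-d/2}e^{-|p|^2/2\lambda}\,dp$ and combine it with the second representation to get, for each fixed $\lambda>0$,
\begin{equation*}
  \intl_{\rn}(2\pi\lambda)^{-d/2}e^{-|p|^2/2\lambda}\bigl(1-\cos(y\cdot p)\bigr)\,dp
  = 1-e^{-\lambda|y|^2/2}.
\end{equation*}
Multiplying by $\tfrac12 e^{-\lambda/2}$ and integrating in $\lambda$ over $(0,\infty)$, Tonelli's theorem permits exchanging the $dp$- and $d\lambda$-integrals, and the inner $\lambda$-integral is precisely $g(p)$; so the left-hand side becomes $\intl_{\rn}(1-\cos(y\cdot p))g(p)\,dp$. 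On the right I am left with $\tfrac12\intl_0^\infty e^{-\lambda/2}\,d\lambda-\tfrac12\intl_0^\infty e^{-\lambda(1+|y|^2)/2}\,d\lambda = 1-\tfrac{1}{1+|y|^2}$, which is the asserted value.

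For integrability and the moments I would again use Tonelli. Integrating $g$ gives $\intl_{\rn}g(p)\,dp=\tfrac12\intl_0^\infty e^{-\lambda/2}\,d\lambda=1$, so $g$ is in fact a probability density. For the absolute moment of order $k$, the inner integral $\intl_{\rn}|p|^k(2\pi\lambda)^{-d/2}e^{-|p|^2/2\lambda}\,dp$ is the $k$-th absolute moment of an $N(0,\lambda\,\id)$ law and equals $c_{k,d}\,\lambda^{k/2}$ by Gaussian scaling; hence $\intl_{\rn}|p|^k g(p)\,dp=\tfrac{c_{k,d}}{2}\intl_0^\infty\lambda^{k/2}e^{-\lambda/2}\,d\lambda$, a finite multiple of $\Gamma(k/2+1)$, finite for every $k$.

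There is essentially no hard step here: the whole argument is a single Tonelli interchange justified by nonnegativity of the integrand, and the only things to keep straight are the two elementary integral representations above and the Gaussian scaling that produces the factor $\lambda^{k/2}$ in the moment computation.
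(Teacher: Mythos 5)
Your proof is correct and follows essentially the same route as the paper's: the representation $\frac{|y|^2}{1+|y|^2}=\frac12\int_0^\infty\bigl(1-e^{-\lambda|y|^2/2}\bigr)e^{-\lambda/2}\,d\lambda$, the Gaussian Fourier formula for $e^{-\lambda|y|^2/2}$, an interchange of the $d\lambda$- and $dp$-integrals, and Gaussian scaling giving the factor $\lambda^{k/2}$ in the moment computation. The only cosmetic difference is that you work with $\cos(y\cdot p)$ from the start, so nonnegativity lets you invoke Tonelli directly, whereas the paper integrates $e^{iy\cdot p}$ via Fubini and passes to the real part at the very end.
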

\begin{proof}%%%{\bfseries Proof:}
   The Tonelli-Fubini Theorem and a change of variables
show for $k\in\nat_0$
\begin{align*}
    \intl_{\real^d} |p|^k\,g(p)\,dp
    &=
    \frac{1}{2} \intl_0^\infty (2\pi\lambda)^{-d/2} \intl_{\rn} |p|^k\,e^{-|p|^2/2\lambda}
        \,dp \; e^{-\lambda/2}\,d\lambda\\
    &=
    \frac{1}{2} \intl_0^\infty (2\pi\lambda)^{-d/2} \intl_{\rn} \lambda^{k/2} |\eta|^k
        \,e^{-|\eta|^2/2} \lambda^{d/2}\,d\eta \; e^{-\lambda/2}\,d\lambda\\
    &=
    \frac{1}{2} (2\pi)^{-d/2} \intl_{\rn} |\eta|^k \,e^{-|\eta|^2/2}\,d\eta\
    \intl_0^\infty \lambda^{k/2}\,e^{-\lambda/2}\,d\lambda ,
\end{align*}
i.e., $g$ has absolute moments of any order. Moreover, the
elementary formula
$$
    e^{-\lambda |y|^2/2} = (2\pi\lambda)^{-d/2}\intl_{\real^n}
                            e^{-|p|^2/2\lambda} \, e^{iy\cdot p}\, dp
$$
and Fubini's Theorem yield
\begin{align*}
    \frac{|y|^2}{1+|y|^2}
&=
    \frac{1}{2}\intl_0^\infty \big( 1-e^{-\lambda |y|^2/2}\big)\,e^{-\lambda/2}\,d\lambda\\   &=
    \frac{1}{2}\intl_0^\infty\intl_{\real^d} (2\pi\lambda)^{-n/2} \big(1-e^{iy\cdot p}\big)
    e^{-|p|^2/2\lambda}\, e^{-\lambda/2}\,dp\,d\lambda\\
&=
    \intl_0^\infty \big(1-e^{iy\cdot p}\big)\,g(p)\,dp .
\end{align*}
The assertion follows since the left-hand side is real-valued.
\end{proof}%%%\hfill$\blacksquare$

\subsection{The Chernoff theorem}

If $X,X_1,X_2$ are Banach spaces, then $L(X_1,X_2)$ denotes the
space of continuous linear mappings from $X_1$ to $X_2$ equipped
with the strong operator topology, $L(X)=L(X,X)$, $\|\cdot\|$
denotes the operator norm on $L(X)$ and $\id$ the identity operator
in $X$. If $D(T) \subset X$ is a linear subspace and $T: D(T) \to X$
is a linear operator, then $D(T)$ denotes the domain of $T$.

The derivative at the origin of a function $F:[0,\varepsilon) \to
L(X)$, $\varepsilon > 0$, is a linear mapping $F'(0): D(F'(0)) \to
X$ such that
\begin{eqnarray*}
F'(0)g := \lim_{t \searrow 0}\frac{F(t)g -F(0)g}{t},
\end{eqnarray*}
where $D(F'(0))$ is the vector space of all elements $g \in X$ for
which the above limit exists.

%%%%%%%%%%%%%%----------Definition o f strongly continuous semigroup? - where to put it?%%%
In the sequel we use the following version of Chernoff's theorem
(see \cite{STT}).
\begin{theorem}[Chernoff]\label{Chernoff}
Let $X$ be a Banach space, $F:[0,\infty)\to{L}(X)$ be a (strongly)
continuous mapping such that $F(0) =\id$  and $\|F(t)\|\le e^{at}$
for some  $ a\in [0, \infty)$ and all $t \ge 0$. Let
 $D$ be a linear subspace of  $D(F'(0))$ such that the restriction of the operator
 $F'(0)$ to this subspace is closable. Let $(A, D(A))$ be this closure. If
 $(A, D(A))$ is the generator of a strongly continuous semigroup
 $(T_t)_{t \ge 0}$, then for any $t_0 >0$ the sequence
 $(F(t/n))^n)_{n \in {\mathds N}}$ converges to $(T_t)_{t \ge 0}$ as $n\to\infty$
 in the strong operator topology, uniformly with respect to
 $t\in[0,t_0],$ i.e., $T_t = \lim_{n\to\infty}(F(t/n))^n$.
\end{theorem}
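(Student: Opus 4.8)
The plan is to reduce to the contractive case and then run the classical two-ingredient comparison. First I would replace $F(t)$ by $\widetilde F(t):=e^{-at}F(t)$ and $T_t$ by $\widetilde T_t:=e^{-at}T_t$, which allows me to assume $a=0$, i.e.\ $\|F(t)\|\leq1$ and $\|T_t\|\leq1$. This is harmless: one checks $\widetilde F(0)=\id$, $\|\widetilde F(t)\|\leq1$, and $\widetilde F'(0)=F'(0)-a\,\id$ on $D$, so that the closure of $\widetilde F'(0)|_D$ equals $A-a\,\id$, which has the same domain and core $D$ as $A$ and generates $\widetilde T_t$. Since $\big(\widetilde F(t/n)\big)^n=e^{-at}\big(F(t/n)\big)^n$ and $\widetilde T_t=e^{-at}T_t$, the general statement follows from the contractive one after multiplying by the factor $e^{at}\leq e^{at_0}$.

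Assuming now $\|F(t)\|\leq1$, I would introduce the bounded ``difference quotients'' $A_s:=\tfrac1s\big(F(s)-\id\big)$, each generating a uniformly continuous contraction semigroup $e^{rA_s}$; since $n\big(F(t/n)-\id\big)=t\,A_{t/n}$ we have $e^{\,n(F(t/n)-\id)}=e^{\,tA_{t/n}}$, and the triangle inequality splits the error into $\big\|(F(t/n))^n x-e^{\,tA_{t/n}}x\big\|$ and $\big\|e^{\,tA_{t/n}}x-T_tx\big\|$. The first term I would control by Chernoff's square-root estimate: with $C:=F(t/n)$ the Poisson representation $e^{\,n(C-\id)}=e^{-n}\sum_{k\geq0}\tfrac{n^k}{k!}C^k$ gives $(C^n-e^{\,n(C-\id)})x=e^{-n}\sum_{k\geq0}\tfrac{n^k}{k!}(C^n-C^k)x$, whence telescoping and $\|C\|\leq1$ yield $\|(C^n-C^k)x\|\leq|n-k|\,\|(C-\id)x\|$, and Cauchy--Schwarz with the Poisson variance $e^{-n}\sum_k\tfrac{n^k}{k!}(n-k)^2=n$ gives
$$\big\|(F(t/n))^n x-e^{\,tA_{t/n}}x\big\|\leq\sqrt n\,\big\|(F(t/n)-\id)x\big\|=\frac{t}{\sqrt n}\,\big\|A_{t/n}x\big\|.$$
For $x\in D$ one has $A_sx\to F'(0)x=Ax$ as $s\to0$, so $\sup_{0<s\leq t_0}\|A_sx\|<\infty$ and this term is $\leq t_0\,n^{-1/2}\sup_s\|A_sx\|\to0$, uniformly in $t\in[0,t_0]$.

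For the second term I would invoke the Trotter--Kato approximation theorem for the generators $A_s$: \emph{consistency} $A_sx\to Ax$ ($x\in D$) holds by the definition of $F'(0)$, \emph{stability} is automatic since the $e^{rA_s}$ are contractions, and $D$ is a core for $A=\overline{F'(0)|_D}$ by hypothesis; hence $e^{\,tA_{t/n}}x\to T_tx$ strongly, uniformly for $t$ in compact intervals. Combining this with the previous step proves the convergence for every $x\in D$. Finally, since $D$ is a core it is dense in $X$, and the three families $\big(F(t/n)\big)^n$, $e^{\,tA_{t/n}}$ and $T_t$ are uniformly bounded on $[0,t_0]$ (by $1$, by the contraction bound, and by $1$ respectively), so a routine $\eps/3$-argument extends the convergence to all $x\in X$, uniformly in $t\in[0,t_0]$.

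The decisive point, and the only genuinely clever one, is the square-root estimate: the naive telescoping alone would produce the useless bound $n\,\|(C-\id)x\|=t\,\|A_{t/n}x\|$, and it is precisely the Cauchy--Schwarz/Poisson-variance refinement that improves $n$ to $\sqrt n$ and thereby yields the crucial gain $t/\sqrt n\to0$. The remaining difficulty is organisational rather than computational: the hypotheses that $F'(0)|_D$ be closable with closure $(A,D(A))$ and that $D$ be a core are used in exactly two places --- the consistency step feeding Trotter--Kato and the density needed for the final extension --- and I would make sure these are the only inputs beyond the elementary estimate.
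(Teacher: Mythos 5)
The paper itself does not prove this theorem: it is quoted from \cite{STT} (see also Chernoff's memoir \cite{Ch2}), so there is no in-paper argument to compare against line by line. Your proof is the classical one --- rescaling to the contractive case, the square-root lemma $\|C^nx-e^{n(C-\id)}x\|\le\sqrt{n}\,\|(C-\id)x\|$ via the Poisson representation and Cauchy--Schwarz, then the Trotter--Kato approximation theorem for the bounded generators $A_s=s^{-1}(F(s)-\id)$ --- and it is correct in substance; this is precisely the route of Chernoff's original paper and of the textbook treatments, so nothing here diverges from the source the authors rely on.

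Two steps do deserve an explicit line in your write-up. First, in the reduction you assert $\|\widetilde T_t\|=\|e^{-at}T_t\|\le 1$, but the theorem's hypotheses put the bound $e^{at}$ only on $F$, not on $T_t$; the contraction form of Trotter--Kato (and your final $\eps/3$ argument) genuinely needs this. It is true, but requires the observation that for $x\in D$ and $x^*$ in the duality set, $\Re\langle x^*,(F(t)-\id)x\rangle\le(e^{at}-1)\|x\|^2$, so that $F'(0)|_D-a\,\id$ is dissipative; dissipativity passes to the closure $A-a\,\id$, and a dissipative operator that generates a strongly continuous semigroup generates a contraction semigroup (Lumer--Phillips). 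Second, in your consistency step the approximating generator $A_{t/n}$ depends on $t$ as well as on $n$, so the sequence version of Trotter--Kato does not literally apply; you need the family statement $\lim_{s\downarrow 0}\sup_{r\in[0,t_0]}\|e^{rA_s}x-T_rx\|=0$ (which follows from the sequential version applied to arbitrary $s_k\downarrow 0$), after which $s=t/n\le t_0/n$ gives the claimed uniformity in $t\in[0,t_0]$. Both points are routine to repair, so I regard the proposal as a complete and correct proof of the cited theorem once these two sentences are added.
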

A family of operators $(F(t))_{t \ge 0}$ is called \emph{Chernoff
equivalent} to the semigroup $(T_t)_{t \ge 0}$ if this family
satisfies the assertions of Chernoff's theorem;
then, by Chernoff's theorem we have in $L(X)$  locally uniformly with respect to $t$
\begin{equation}\label{FF}
T_t = \lim_{n\to\infty}(F(t/n))^n. \end{equation}
 The  equality \eqref{FF} is called
\emph{Feynman formula} for the semigroup $(T_t)_{t \ge 0}$.

\section{\bfseries Hamiltonian Feynman formula for some Feller semigroups}

%%%%%%%%%%%%%%%%%%%%%%%%%%%%%%%%%%% NEW PART  %%%%%%%%%%%%%%%%%%%%%%%%%%%%%%

Consider a function  $H:\rn\times\rn\to\comp$ which is  measurable, locally bounded in both
variables $(q,p)$, and satisfies for fixed $q$ the L\'evy-Khintchine
representation \eqref{g116}, i.e. $H(q,\bu)$ is a continuous
negative definite function for all $q\in\rn$. Assume that
\begin{equation}\label{a}
\displaystyle\sup_{q\in\rn} |H(q,p)| \leq
                 \kappa(1+|p|^2)\quad \text{for all}\quad p\in\rn \quad\text{and some}\quad \kappa>0 ,
\end{equation}
\begin{equation}\label{b}
  \displaystyle p\mapsto H(q,p)\quad \text{is uniformly (w.r.t.} \quad q\in\rn \text{)
                 continuous at}\quad p=0,
\end{equation}
\begin{equation}\label{c}
\displaystyle q\mapsto H(q,p)\quad \text{is continuous for
all}\quad p\in\rn.
\end{equation}

%%%%%%%%%%%%%%%%%%%%%%%%%%%%%%%%%%%%   END OF NEW PART %%%%%%%%%%%%%%%%%%%%%%%%%%%

 Consider a  $\Psi$DO
${H}(\cdot,D)$ with the symbol $H(q,p)$, i.e. for each $\ffi\in
C_c^\infty(\real^d)$
%%%$\ffi\in S(\real)$
we have
\begin{equation}\label{H}
{H}(q,D)\ffi(q)=(2\pi)^{-d/2}\intl_{\real^d}e^{ip\cdot q}H(q,p)\widehat{\ffi}(p)dp.
\end{equation}
Note that  (due to the estimate \eqref{g25}) the condition \eqref{a} actually means that a $\Psi$DO
$H(\cdot,D)$ is an operator with  bounded ``coefficients''
$(a(q),\ell(q),Q(q),N(q,\bu))$.

\bigskip\noindent\textbf{Assumption A.}
\begin{compactenum}[(i)]
\label{assumHFF}
\item  We assume that  the function $H(q,p)$ is such that  $-H(\cdot, D)$ is closable and the closure is the
generator of a strongly continuous semigroup on $C_\infty(\real^d)$.

\item
 We assume also that the set $C_c^\infty(\real^d)$ of test
functions is an operator core for this generator.
\end{compactenum}

\bigskip
\begin{remark}
Conditions on the function $H(q,p)$ to fulfill Assumption
A (i) can be found, for example,  in Vol. 2 of
\cite{NJ} (Thms. 2.6.4, 2.6.9, 2.7.9, 2.7.16, 2.7.19, 2.8.1) or
in \cite{JSch}. For all these constructions $C^\infty_c(\real^d)$ is always an operator core. Note that
 Assumption  A (ii) holds also for example for
generators of L\'{e}vy processes, see \cite{Sato}
(Theo.~31.5).
\end{remark}
\bigskip

Let $F(t)$ be a $\Psi$DO with the symbol  $e^{-tH(q,p)}$, i.e.
for each $\ffi\in C_c^\infty(\real^d)$
\begin{equation}\label{F(t)}
F(t)\ffi(q)=(2\pi)^{-d/2}\intl_{\real^d}e^{ip\cdot q}e^{-tH(q,p)}\widehat{\ffi}(p)dp.
\end{equation}

\begin{lemma}
 For each
$\ffi\in C_c^\infty(\real^d)$ the function $F(t)\ffi$ belongs to
$C_\infty(\real^d)$.
\end{lemma}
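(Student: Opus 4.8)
The plan is to establish the two defining properties of $C_\infty(\rn)$ separately: first that $F(t)\ffi$ is continuous, which is routine, and then that it vanishes at infinity, which is the real content. Throughout I would use three elementary facts about the symbol: for each fixed $q$ the function $H(q,\bu)$ is continuous negative definite, so $\Re H(q,p)\ge0$ and hence $|e^{-tH(q,p)}|\le1$; the inequality $|1-e^{-tz}|\le t|z|$, valid for all $\Re z\ge0$ (integrate $\frac{d}{ds}e^{-sz}$ over $[0,t]$); and the fact that $\widehat\ffi\in S(\rn)$ is integrable.

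Continuity follows at once from dominated convergence applied to the defining integral \eqref{F(t)}. If $q_n\to q_0$, the integrand $e^{ip\bu q_n}e^{-tH(q_n,p)}\widehat\ffi(p)$ converges pointwise in $p$ to $e^{ip\bu q_0}e^{-tH(q_0,p)}\widehat\ffi(p)$ by the continuity \eqref{c} of $q\mapsto H(q,p)$ and of $z\mapsto e^{-tz}$, while it is dominated by the fixed integrable function $|\widehat\ffi|$ since $|e^{-tH(q,p)}|\le1$; hence $F(t)\ffi(q_n)\to F(t)\ffi(q_0)$.

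For the decay at infinity I would first rewrite $F(t)$ as an integral operator against probability measures. Factoring $e^{-tH(q,p)}=e^{-ta(q)}e^{-t\widetilde H(q,p)}$ with $\widetilde H(q,p):=H(q,p)-H(q,0)$ peels off the bounded, by \eqref{c} continuous, prefactor $e^{-ta(\bu)}$ (note $a(q)=H(q,0)\ge0$); multiplication by such a prefactor preserves $C_\infty(\rn)$, so I may assume $H(q,0)=0$. Then, for each fixed $q$, Theorem \ref{s3} shows $e^{-tH(q,\bu)}$ is continuous and positive definite, and Bochner's Theorem \ref{s4} furnishes a probability measure $P_t^q$ on $\rn$ with $e^{-tH(q,p)}=\int_{\rn}e^{-ip\bu y}\,P_t^q(dy)$, the mass being $1$ because $H(q,0)=0$. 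Substituting this into \eqref{F(t)}, interchanging integrals by Tonelli–Fubini (the modulus of the integrand equals the integrable $|\widehat\ffi(p)|$), and recognising the inner integral as Fourier inversion yields
\begin{equation*}
   F(t)\ffi(q)=\intl_{\rn}\ffi(q-y)\,P_t^q(dy).
\end{equation*}

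The decay is now reduced to uniform tightness of $\{P_t^q\}_{q\in\rn}$. With $\supp\ffi\subset\{|q|\le R\}$ one has $|F(t)\ffi(q)|\le\|\ffi\|_\infty\,P_t^q(\{|y|\ge|q|-R\})$, so it suffices to prove $\sup_q P_t^q(\{|y|\ge M\})\to0$ as $M\to\infty$. For this I would invoke the classical truncation inequality: averaging $1-\cos(p\bu y)$ over $\{|p|\le u\}$ produces constants $c_d,K>0$ (depending only on $d$) with $\int_{|p|\le u}(1-\cos(p\bu y))\,dp\ge c_d u^d$ whenever $|y|\ge K/u$; integrating against $P_t^q$ and using $\Re e^{-tH(q,p)}=\int\cos(p\bu y)\,P_t^q(dy)$ gives
$$
   P_t^q(\{|y|\ge K/u\})\le\frac{1}{c_d u^d}\intl_{|p|\le u}\bigl(1-\Re e^{-tH(q,p)}\bigr)\,dp .
$$
Bounding the integrand uniformly in $q$ by $1-\Re e^{-tH(q,p)}\le|1-e^{-tH(q,p)}|\le t|H(q,p)|$ and using the uniform continuity \eqref{b} at $p=0$ (recall $H(q,0)=0$) to make $|H(q,p)|<\eps$ for all $|p|\le u$ and all $q$, the right-hand side becomes at most a constant times $t\eps$, uniformly in $q$; letting $\eps\to0$ gives the required tightness with $M=K/u\to\infty$. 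The main obstacle is precisely this last step: because the symbol genuinely depends on $q$, $F(t)$ is not a convolution and no naive Riemann–Lebesgue argument applies, so it is exactly the uniform-in-$q$ continuity hypothesis \eqref{b} that secures uniform tightness and thereby the vanishing of $F(t)\ffi$ at infinity.
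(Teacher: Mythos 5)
Your proof is correct, and it takes a genuinely different route from the paper's. After the same dominated-convergence argument for continuity, the paper splits off the value at $p=0$ \emph{additively}: it forms $h_t(q,p)=e^{-tH(q,0)}-e^{-tH(q,p)}$, which by Corollary \ref{cor} is continuous negative definite in $p$, writes the corresponding operator in the integro-differential form \eqref{g350}, observes that for $|q|>2R$ (with $\supp\ffi\subset B_R(0)$) only the big-jump term $\int_{|y|>R}\ffi(q+y)\,N_t(q,dy)$ survives, and controls the tail mass of the L\'evy measure $N_t(q,\bu)$ by $\sup_{|\xi|\leq 1/R}|h_t(q,\xi)|$ via Lemma \ref{l1} (the Gaussian-mixture kernel $g$) together with the growth estimate \eqref{g25}; condition \eqref{b} then makes this vanish as $R\to\infty$, uniformly in $q$. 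You instead split \emph{multiplicatively}, $e^{-tH(q,p)}=e^{-tH(q,0)}e^{-t\widetilde H(q,p)}$, represent $e^{-t\widetilde H(q,\bu)}$ via Bochner's theorem as the Fourier transform of a probability measure $P_t^q$, rewrite $F(t)\ffi(q)=\int_{\rn}\ffi(q-y)\,P_t^q(dy)$ --- which is exactly the frozen-coefficient representation the paper itself uses later in the proof of Lemma \ref{contraction}, though not here --- and prove uniform-in-$q$ tightness of $\{P_t^q\}$ by the classical truncation inequality plus $|1-e^{-tz}|\leq t|z|$ for $\Re z\geq 0$. The two arguments are structurally parallel: both reduce to "tail mass of a measure is controlled by the symbol near the origin, uniformly in $q$, via \eqref{b}", with your average over the ball $\{|p|\leq u\}$ playing the role of the paper's kernel $g$; but you work at the level of the Bochner measures of the positive definite functions $e^{-t\widetilde H(q,\bu)}$, while the paper works at the level of the L\'evy measures of the negative definite functions $h_t(q,\bu)$. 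What your route buys is self-containedness (it bypasses the representation \eqref{g350} and Lemmas \ref{l2} and \ref{l1}) and an explicit quantitative tightness rate $\sup_q P_t^q(\{|y|\geq K/u\})\leq C\,t\,\sup_q\sup_{|p|\leq u}|H(q,p)-H(q,0)|$; what the paper's buys is that it stays entirely inside the $\Psi$DO/L\'evy-characteristics calculus set up in Section 2 and reuses its lemmas. Two small points you should make explicit in a final write-up: $H(q,0)=a(q)\in[0,\infty)$ is real (so $e^{-tH(q,0)}$ is a genuine bounded continuous nonnegative multiplier, and $\widetilde H(q,\bu)$ is again continuous negative definite with $\widetilde H(q,0)=0$, conditions \eqref{b}, \eqref{c} persisting), and no measurability of $q\mapsto P_t^q$ is needed, since only pointwise bounds and suprema over $q$ enter the argument.
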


\begin{proof}%%%\noindent {\bfseries Proof:}
The Fourier transform $\widehat \ffi$ of a
test function $\ffi\in C_c^\infty(\rn)$ is in the Schwartz space
$S(\rn)$ of rapidly decreasing functions. Since  $q\mapsto
e^{-tH(q,p)}$ is continuous (by assumption \eqref{c}) and bounded
($\Re H\ge 0$ due to properties of continuous negative definite
functions), Lebesgue's Dominated Convergence theorem shows that
$F(t)$ maps $C_c^\infty(\rn)$ into $C(\rn)$.

Let us prove, that $F(t)\ffi(q)\to 0$ when $|q|\to\infty$. Since
$H(q,\bu)$ is continuous negative definite for all $q\in\rn$ then
$e^{-tH(q,\bu)}$ is continuous positive definite for all $q\in\rn$
and all $t>0$ due to Theorem \ref{s3}. Then  the function
$$\bigl[p\mapsto
h_t(q,p):=e^{-tH(q,0)}-e^{-tH(q,p)}\bigr]
$$ is also continuous negative
definite for all $q\in\rn$ by Corollary \ref{cor}. Hence,
$h_t(q,\bu)$ satisfies  a L\'evy-Khintchine representation
\begin{equation}\label{g1160}
    h_t(q,p)
    = a_t(q) +i\ell_t(q)\cdot p + p\cdot Q_t(q)p
      + \intl_{y\neq 0}
        \left(1-e^{iy\cdot p} + \frac{iy\cdot p}{1+|y|^2} \right)\,N_t(q,dy),
\end{equation}
where $(a_t(q), \ell_t(q), Q_t(q),N_t(q,\bu))$ is for each $q\in\rn$
the L\'evy characteristics of $h_t(q,\bu)$. Again we can consider a
$\Psi$DO $h_t(q,D)$ with the symbol $h_t(q,p)$, i.e. for each
$\ffi\in C^\infty_c(\rn)$
\begin{equation}\label{g350}\begin{aligned}
    h_t(q,D)\ffi(q)&
    =(2\pi)^{-d/2}\int_{\real^d}e^{ip\cdot q}{h_t(q,p)}\widehat{\ffi}(p)dp\\
    &=
    -a_t(q)\ffi(q) + \ell_t(q)\cdot\nabla \ffi(q)
    + \sum_{j,k=1}^d Q_t^{jk}(q)\partial_j\partial_k u(q)\\
    &\qquad\mbox{} + \intl_{y\neq 0} \left( \ffi(q+y) - \ffi(q)
    - \frac{y\cdot\nabla \ffi(q)}{1+|y|^2}\right)\,N_t(q,dy)
\end{aligned}
\end{equation}

Note, that
\begin{equation*}
F(t)\ffi(q)=(2\pi)^{-d/2}e^{-tH(q,0)}\intl_{\real^d}e^{ip\cdot q}\widehat{\ffi}(p)dp\,-\,
(2\pi)^{-d/2}\intl_{\real^d}e^{ip\cdot q}{h_t(q,p)}\widehat{\ffi}(p)dp.
\end{equation*}
Since $\Re H\ge 0$ then $\sup\limits_{q\in\rn}\big|
e^{-tH(q,0)}\big|\le1$, and the first integral in the above formula tends to zero as $|q|\to\infty$
by the Riemann--Lebesgue Theorem. Thus, we only need to
show that
$$
\bigg[q\mapsto
(2\pi)^{-d/2}\intl_{\real^d}e^{ip\cdot q}{h_t(q,p)}\widehat{\ffi}(p)dp\bigg]\in
C_\infty(\rn).
$$

As $\ffi$ has compact support, there is some $R>0$ such that $\supp
\ffi\subset B_R(0)$. For all $|q|>2R$ formula (\ref{g350}) becomes
\begin{align*}
    |h_t(q,D)\ffi(q)|
    &= \left|\, \intl_{\,y\neq 0} \ffi(q+y)\,N_t(q,dy)\right|\\
    &= \left|\, \intl_{\,|y| > R} \ffi(q+y)\,N_t(q,dy)\right|\\
    &\leq 2\intl_{\,y\neq 0} \frac{|y/R|^2}{1+ |y/R|^2}\,N_t(q,dy)\cdot\nnorm \ffi_\infty.
\end{align*}
{\allowdisplaybreaks The last line follows from the elementary
inequality $\frac 12 \leq \frac{t^2}{1+t^2}$ for $|t|>1$ which
applies if $|y|>R$, and from $\ffi(q+y) = 0$ if $|q|>2R$ and
$|y|\leq R$. We can now use Lemma \ref{l1},   the L\'evy-Khintchine
representation of $h_t(q,\bu)$ and the estimate \eqref{g25} for a
continuous negative definite function $h_t(q,\frac{\bu}{R})$ to get
\begin{align*}
    |h_t(q,D)\ffi(q)|
    &\leq
        2\intl_{y\neq 0}
        \intl_\rn\left(1-\cos\frac{y\cdot\eta}{R}\right)\,g(\eta)\,d\eta\,N_t(q,dy)\cdot\nnorm \ffi_\infty\\
    &\le
       2\intl_\rn\Re h_t\left(q,\frac\eta R\right)\,g(\eta)\,d\eta\cdot\nnorm \ffi_\infty\\
    &\leq
       2\intl_\rn\left| h_t\left(q,\frac\eta R\right)\right|\,g(\eta)\,d\eta\cdot\nnorm \ffi_\infty\\
    &\leq
        2\sup_{|\xi|\leq 1/R} |h_t(q,\xi)|\int_\rn \left(1+|\eta|^2\right)\,g(\eta)\,d\eta\cdot\nnorm \ffi_\infty.
\end{align*}
}%%ende von \allowdisplaybreaks
Since $g(\eta)$ has absolute moments of any order, we see
$$
    |h_t(q,D)\ffi(q)| \leq c_g\, \sup_{q\in\rn} \sup_{|\xi|\leq 1/R} |h_t(q,\xi)|\cdot\nnorm \ffi_\infty
    \qquad\textrm{for all}\quad |q| > 2R.
$$
As  $h_t(q,0)=0$, the condition \eqref{b} tells us that
$\lim_{|q|\to\infty} h_t(q,D)\ffi(q) = 0$. Therefore, $h_t(q,\bu)\ffi\in
C_\infty(\rn)$.
\end{proof}%%%%% \hfill$\blacksquare$

\bigskip

\begin{lemma}\label{contraction}
For each $t>0$  a mapping $F(t)$ can be extended to a contraction
$F(t): \cont_\infty(\real^d)\to\cont_\infty(\real^d)$.
\end{lemma}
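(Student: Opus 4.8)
\emph{Proof (plan).}
The plan is to realise $F(t)$, for each fixed $q$, as an integral (averaging) operator against a \emph{positive} finite measure of total mass at most one, whence contractivity is immediate, and then to extend from $C_c^\infty(\rn)$ to $\cont_\infty(\rn)$ by density. The starting point is the observation already exploited in the preceding lemma: for every fixed $q\in\rn$ the function $p\mapsto e^{-tH(q,p)}$ is continuous and positive definite by Theorem \ref{s3}, since $H(q,\bu)$ is continuous negative definite. Hence Bochner's Theorem \ref{s4} provides a bounded \emph{positive} Radon measure $\mu_t^q\in\mathcal M_b^+(\rn)$ with
\begin{equation*}
e^{-tH(q,p)}=(2\pi)^{-d/2}\intl_{\real^d}e^{-ip\cdot y}\,\mu_t^q(dy),\qquad p\in\rn .
\end{equation*}
Evaluating at $p=0$ and recalling $H(q,0)=a(q)\ge 0$ yields the total mass $\mu_t^q(\rn)=(2\pi)^{d/2}e^{-tH(q,0)}\le(2\pi)^{d/2}$.

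Next I would insert this representation into \eqref{F(t)} and interchange the order of integration. Since $\widehat\ffi\in S(\rn)$ is integrable and $\mu_t^q$ is finite, the integrand has absolute value $|\widehat\ffi(p)|$ and is therefore $dp\otimes\mu_t^q(dy)$-integrable, so Fubini's theorem applies; recognising the inner $p$-integral as $\mathcal F^{-1}[\widehat\ffi](q-y)=\ffi(q-y)$ gives
\begin{equation*}
F(t)\ffi(q)=(2\pi)^{-d/2}\intl_{\real^d}\ffi(q-y)\,\mu_t^q(dy).
\end{equation*}
Thus at the point $q$ the operator $F(t)$ averages $\ffi$ against the positive sub-probability kernel $(2\pi)^{-d/2}\mu_t^q$, and the contraction estimate is immediate:
\begin{equation*}
|F(t)\ffi(q)|\le(2\pi)^{-d/2}\mu_t^q(\rn)\,\nnorm\ffi_\infty=e^{-tH(q,0)}\,\nnorm\ffi_\infty\le\nnorm\ffi_\infty,
\end{equation*}
so $\nnorm{F(t)\ffi}_\infty\le\nnorm\ffi_\infty$ for every $\ffi\in C_c^\infty(\rn)$.

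Finally I would extend $F(t)$ to all of $\cont_\infty(\rn)$. The preceding lemma guarantees $F(t)\ffi\in\cont_\infty(\rn)$ for $\ffi\in C_c^\infty(\rn)$, and $C_c^\infty(\rn)$ is dense in $\cont_\infty(\rn)$ for the sup-norm; a contraction defined on a dense subspace admits a unique continuous linear extension to the whole space with the same norm bound, giving the desired contraction $F(t)\colon\cont_\infty(\rn)\to\cont_\infty(\rn)$. The only genuinely non-routine step is the passage to the averaging representation: it is the \emph{positivity} of the Bochner measure $\mu_t^q$ (not merely the boundedness of the symbol) that controls the total mass and hence forces the contraction constant $e^{-tH(q,0)}\le 1$; the Fubini interchange and the density/extension argument are standard.
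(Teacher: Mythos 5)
Your proposal is correct and is essentially the paper's own argument: the paper ``freezes the coefficients'' at $q_0$ and invokes the convolution semigroup $(\mu_t^{q_0})_{t\ge0}$ with $\mathcal F^{-1}[\mu_t^{q_0}]=(2\pi)^{-d/2}e^{-tH(q_0,\cdot)}$, which is exactly your Bochner representation, arriving at the same averaging formula $F^{q_0}(t)\ffi(q)=\int_{\real^d}\ffi(q-y)\,\mu_t^{q_0}(dy)$, the bound $\|F(t)\ffi\|_\infty\le\|\ffi\|_\infty$, and the same B.L.T.\ extension by density. The only cosmetic differences are that you make the total-mass computation $\mu_t^q(\rn)=(2\pi)^{d/2}e^{-tH(q,0)}\le(2\pi)^{d/2}$ and the Fubini justification explicit, where the paper relies on the sub-probability property of the convolution semigroup.
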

\begin{proof}%%%\noindent {\bfseries Proof:}
 Let us freeze the coefficients (see e.g.
\cite{JP}). For each $t>0$ and each $q_0\in\real^d$ let us
consider a $\Psi$DO $F^{q_0}(t)$ with the symbol $e^{-tH(q_0,p)}$,
i.e. for any $\ffi\in C_c^\infty(\real^d)$ we have
$$F^{q_0}(t)\ffi(q)=(2\pi)^{-d/2}\intl_{\real^d}
e^{ip\cdot q}e^{-tH(q_0,p)}\widehat{\ffi}(p)dp.
$$
Then $F(t)\ffi(q)=F^q(t)\ffi(q)$ for any $\ffi\in C_c^\infty(\real^d)$
and any $q\in\real^d$. Since for each $q_0\in\real^d$  the function
$e^{-tH(q_0,\cdot)}$ is positive definite then there exists a
convolution semigroup $(\mu^{q_0}_t)_{t\ge0}$, such that
$\mathcal F^{-1}[\mu^{q_0}_t]=(2\pi)^{-d/2}e^{-tH(q_0,\cdot)}$ and
$F^{q_0}(t)\ffi(q)=\int_{\real^d} \ffi(q-y)\mu^{q_0}_t(dy)$. Hence,
for each $q_0\in\real^d$ a family $(F^{q_0}(t))_{t\ge 0}$ is a Feller
semigroup, and  for each $q,\, q_0\in\real^d$ we have
$$
\left| F^{q_0}(t)\ffi(q)  \right|=\left| \,\intl_{\,\real^d}
\ffi(q-y)\mu^{q_0}_t(dy)\right|\le \|\ffi\|_\infty.
$$
Then
$\|F(t)\ffi\|_\infty=\sup_{q\in\real^d}|F(t)\ffi(q)|=\sup_{q\in\rn}|F^q(t)\ffi(q)|\le
\|\ffi\|_\infty$ for any $\ffi\in S(\real^d)$. Hence, the family $F(t)$ can be
extended to a contraction from  $\cont_\infty(\real^d)$ into itself by
the B.L.T. Theorem (see \cite{ReedSimon}, p.9).%%%%%%%%%%%%%%%%-the page is checked!!!%%%
\end{proof}%%%\hfill$\blacksquare$

\begin{theorem}\label{HFF} Let the function  $H:\rn\times\rn\to\comp$ be  measurable and locally bounded in both
variables $(q,p)$. Assume that    $H(q,\bu)$ is continuous and
negative definite  for all $q\in\rn$ and that  conditions
\eqref{a}, \eqref{b} and \eqref{c} hold. Under Assumption
A the family $(F(t))_{t\ge0}$ is Chernoff equivalent
to a strongly continuous semigroup $(T_t)_{t\ge0}$, generated by the
closure of the $\Psi$DO $-{H}(\cdot,D)$ with  the symbol $-H(q,p)$, and
the Hamiltonian Feynman formula $T_t=\lim_{n\to\infty}\left[
F(\frac{t}{n})\right]^n$ is valid in $L(C_\infty(\real^d))$  locally
uniformly with respect to $t \ge 0$.
\end{theorem}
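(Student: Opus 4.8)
The plan is to verify the hypotheses of Chernoff's theorem (Theorem \ref{Chernoff}) for the family $(F(t))_{t\ge0}$ on the Banach space $X=C_\infty(\rn)$ and then simply read off the Feynman formula. The two preceding lemmas already supply the structural input: the first lemma shows $F(t)\ffi\in C_\infty(\rn)$ for $\ffi\in C_c^\infty(\rn)$ (this is where conditions \eqref{b} and \eqref{c} enter), and Lemma \ref{contraction} shows that each $F(t)$ extends to a contraction on $C_\infty(\rn)$, so that $F(t)\in L(C_\infty(\rn))$ with $\nnorm{F(t)}\le1$. Hence the growth bound $\nnorm{F(t)}\le e^{at}$ holds with $a=0$, and $F(0)=\id$ is immediate since the symbol at $t=0$ equals $1$. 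What remains is to (i) establish strong continuity of $t\mapsto F(t)$, and (ii) compute $F'(0)$ on the core $C_c^\infty(\rn)$.

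For both (i) and (ii) I would work on the dense subspace $C_c^\infty(\rn)$ and exploit the rapid decay of $\widehat\ffi\in S(\rn)$ together with the uniform growth bound \eqref{a}, $\sup_q|H(q,p)|\le\kappa(1+|p|^2)$. For strong continuity, writing
$$
F(t)\ffi(q)-F(t_0)\ffi(q)=(2\pi)^{-d/2}\intl_{\rn}e^{ip\cdot q}\big(e^{-tH(q,p)}-e^{-t_0H(q,p)}\big)\widehat\ffi(p)\,dp ,
$$
I would use $e^{-tH}-e^{-t_0H}=-H\int_{t_0}^t e^{-rH}\,dr$ and $\Re H\ge0$ to obtain the pointwise bound $|e^{-tH(q,p)}-e^{-t_0H(q,p)}|\le|t-t_0|\,|H(q,p)|\le\kappa|t-t_0|(1+|p|^2)$. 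Integrating against $|\widehat\ffi|$ yields $\nnorm{F(t)\ffi-F(t_0)\ffi}_\infty\le C_\ffi|t-t_0|$ with $C_\ffi=(2\pi)^{-d/2}\kappa\int(1+|p|^2)|\widehat\ffi(p)|\,dp<\infty$; crucially, this bound is uniform in $q$. Strong continuity on all of $C_\infty(\rn)$ then follows by a standard $3\eps$-argument using $\nnorm{F(t)}\le1$ and the density of $C_c^\infty(\rn)$.

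For the derivative at the origin I would use the elementary inequality $|e^{-z}-1+z|\le\tfrac12|z|^2$, valid for $\Re z\ge0$, with $z=tH(q,p)$. This gives $\big|\tfrac{1}{t}(e^{-tH(q,p)}-1)+H(q,p)\big|\le\tfrac{t}{2}|H(q,p)|^2\le\tfrac{t}{2}\kappa^2(1+|p|^2)^2$, and hence, integrating once more against $|\widehat\ffi|\in S(\rn)$,
$$
\Big\|\frac{F(t)\ffi-\ffi}{t}+H(\cdot,D)\ffi\Big\|_\infty\le C_\ffi'\,t ,
$$
which tends to $0$ as $t\to0$. Therefore $C_c^\infty(\rn)\subset D(F'(0))$ and $F'(0)\ffi=-H(\cdot,D)\ffi$ on $C_c^\infty(\rn)$.

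Finally I would invoke Assumption A. Taking $D=C_c^\infty(\rn)$, the operator $F'(0)|_D=-H(\cdot,D)|_{C_c^\infty(\rn)}$ is, by A(i), closable with closure $(A,D(A))$ generating the strongly continuous semigroup $(T_t)_{t\ge0}$, and by A(ii) the space $C_c^\infty(\rn)$ is a core for this generator. Thus all hypotheses of Theorem \ref{Chernoff} are met (with $a=0$), and we conclude $T_t=\lim_{n\to\infty}[F(t/n)]^n$ in $L(C_\infty(\rn))$, locally uniformly in $t$, which is the asserted Hamiltonian Feynman formula. The main obstacle throughout is securing the uniformity in $q$ of the sup-norm estimates: pointwise convergence of the symbols is trivial, but convergence in the Banach-space norm requires precisely the uniform growth bound \eqref{a}, which is exactly what collapses the $q$-dependent symbol estimates into a single finite constant.
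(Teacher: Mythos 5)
Your proposal is correct and follows essentially the same route as the paper's proof: both reduce the statement to checking Chernoff's hypotheses, using Lemma \ref{contraction} for the contraction bound, the growth estimate \eqref{a} combined with the elementary inequalities $|e^{-z}-1|\leq |z|$ and $|e^{-z}-1+z|\leq \tfrac12|z|^2$ (valid for $\Re z\geq 0$) to obtain the uniform-in-$q$ sup-norm estimates on $C_c^\infty(\rn)$, a $3\eps$-argument to pass to all of $C_\infty(\rn)$, and Assumption A with the core $C_c^\infty(\rn)$ to invoke Theorem \ref{Chernoff}. The only (minor, harmless) difference is that you verify strong continuity of $t\mapsto F(t)\ffi$ at every $t_0\geq 0$ via the Lipschitz bound, whereas the paper checks it only at $t=0$.
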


\begin{proof}%%%\noindent {\bfseries Proof:}
 By Lemma \ref{contraction} each $F(t)$ is a
contraction operator on $\cont_\infty(\real^d)$, thus we only need
to prove that for all $\ffi\in\cont_\infty(\real^d)$ we have
$\lim_{t\to0}\|F(t)\ffi-\ffi\|_\infty=0$ and  $F'(0)=-{H}(\cdot, D)$
on a core of $-{H}(\cdot, D)$.

 For any $\ffi\in C_c^\infty(\real^d)$ we have, due to the estimate \eqref{a},
 \begin{align*}
\lim_{t\to0}\|F(t)\ffi-\ffi\|_\infty
&=\lim_{t\to0}\sup_{q\in\real^d}\left| (2\pi)^{-d/2}\intl_{\,\real^d} e^{ip\cdot q}\widehat{\ffi}(p)\left[
e^{-tH(q,p)}-1\right]dp\right|\\
&\le\lim_{t\to0} (2\pi)^{-d/2}\intl_{\real^d} |\widehat{\ffi}(p)|
\sup_{q\in\real^d}\left\{\left|\frac{e^{-tH(q,p)}-1}{-tH(q,p)}\right|\left|
tH(q,p)\right|\right\}dp\\
&\le\lim_{t\to0} (2\pi)^{-d/2}\intl_{\real^d}
t\kappa(1+|p|^2)|\widehat{\ffi}(p)| dp\\
&=0,
\end{align*}
since  $\widehat{\ffi}\in S(\real^d)$. Hence,
$\lim_{t\to0}\|F(t)\ffi-\ffi\|_\infty=0$ for all $\ffi\in
C_c^\infty(\real^d)$. As $\|F(t)\|\le 1$, then the last equality is
true for all $\ffi\in\cont_\infty(\real^d)$ by a 3-epsilon argument.

In a similar way, for any $\ffi\in C_c^\infty(\real^d)$ we have
\begin{align*}
\lim_{t\to0}\bigg\|\frac{F(t)\ffi-\ffi}{t}&+H(\cdot,D)\ffi\bigg\|_\infty\\
&=\lim_{t\to0}\sup_{q\in\real^d}\left| (2\pi)^{-d/2}\intl_{\real^d} e^{ip\cdot q}\widehat{\ffi}(p)\left[
\frac{e^{-tH(q,p)}-1}{t}+(H(q,p))\right]dp\right|\\
&\le\lim_{t\to0} (2\pi)^{-d/2}\int_{\real^d}
|\widehat{\ffi}(p)|\frac{t\kappa^2(1+|p|^2)^2}{2} dp\\
&=0.
 \end{align*}
Thus, all assumptions of Chernoff's theorem are fulfilled, and the
family $F(t)$ is Chernoff equivalent to the semigroup $T_t$
generated by $-H(\cdot,D)$.
\end{proof}%%5\hfill$\blacksquare$

\begin{remark}\label{properSense}
(i)
Let us assume additionally that $H:\rn\times\rn\to\comp$ satisfies the following condition:
\begin{equation*}
\exists\, C>0\quad \,\mbox{such that }\,
\big\|\partial^\alpha_q\partial^\beta_p
e^{tH}\big\|_{L^\infty(\rn\times\rn)}\le C,
\end{equation*}
where $ \alpha,\,\beta\in \mathds{N}_0^d$,  $\, \alpha=0\,\mbox{ or
} \,1$, $  \beta=0\,\mbox{ or } \,1$,
$\,\partial^\alpha_q\partial^\beta_p$ are derivatives in  the distributional
sense. Note, that this condition is fulfilled, e.g. if
$H:\,|H(q,p)|\ge c|p|^r$ for $|p|\gg1$, some $c>0$ and some
$r\in(0,2)$. Then by Theorem 2 of Ref. \cite{Hwang87} we have
 $F(t): L_2(\real^d)\to L_2(\real^d)$.
 In this case the Hamiltonian Feynman formula obtained in
Theorem \ref{HFF} has the following form:
\begin{align}
\label{hff}&(T_t\ffi)(q_0)\\
\notag&=\lim\limits_{n\rightarrow\infty}\frac 1{(2\pi)^{dn}}\int\limits_{(\real^d)^{2n}}
e^{i\sum\limits_{k=1}^n p_k\cdot(q_{k-1}-q_k)}e^{-\frac{t}{n}\sum\limits_{k=1}^n
H(q_{k-1},p_k)}\ffi(q_n)dq_1dp_1\cdots dq_ndp_n,
\end{align}
where the equality holds  in $L_2$-sense (i.e. the integrals in the right hand side must be considered in a regularized sense).  We refer to \cite{Hwang87}
for further  conditions on $H(q,p)$ ensuring   $F(t): L_2(\real^d)\to L_2(\real^d)$.

\bigskip
\noindent (ii)  If   the function $H$ satisfies sufficient
conditions  for $F(t)\ffi$ to be in $ S(\real^d)$ for each $\ffi\in
S(\real^d)$ then for any $\ffi\in S(\real^d)$ the equality in the
Hamiltonian Feynman formula \eqref{hff} holds  in each point
$q_0\in\real^d$. Such conditions can be found in the following
lemma.
\end{remark}

\begin{lemma}
 Let $H: \real^d\times\real^d\to\comp$ be a continuous function
such that for each $q\in\real^d$ a mapping $p\mapsto  H(q,p)$ is
negative definite and
$H(\cdot,\cdot)\in\cont^\infty(\real^d\times\real^d)$.  Assume that for each
$p\in\real^d$ and for each $\alpha,\,\beta\in \mathds{N}^d_0$,
  the following estimates
hold:
\begin{equation}\label{coefficients2}
\sup_{q\in\real^d}|\partial^\alpha_p\partial^\beta_q H(q,p)|\le
f_{\alpha,\beta}(p),
\end{equation}
where all functions $f_{\alpha,\beta}$ are continuous on $\real^d$ and
have at most  polynomial growth at infinity. Then $F(t)\ffi\in
S(\real^d)$ for each $\ffi\in S(\real^d)$.
\end{lemma}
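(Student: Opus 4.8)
The plan is to verify directly that $F(t)\ffi$ satisfies all the Schwartz seminorm bounds, i.e.\ that for every pair of multi-indices $\alpha,\gamma\in\nat_0^d$ one has $\sup_{q\in\rn}\left|q^\gamma\,\partial_q^\alpha\bigl(F(t)\ffi\bigr)(q)\right|<\infty$. First I note that, although $F(t)$ was introduced on $C_c^\infty(\rn)$, the defining integral \eqref{F(t)} makes sense verbatim for any $\ffi\in S(\rn)$: since $\Re H\ge 0$ forces $|e^{-tH(q,p)}|\le 1$ and $\widehat\ffi\in S(\rn)$, the integrand is dominated by $|\widehat\ffi(p)|$, so $F(t)\ffi$ is well defined, and by the local boundedness of the symbol together with dominated convergence it is continuous.

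The key technical ingredient is a symbol estimate: for all multi-indices $\beta,\delta$,
\begin{equation*}
\sup_{q\in\rn}\bigl|\partial_q^\beta\partial_p^\delta e^{-tH(q,p)}\bigr|\le P_{\beta,\delta}(p),
\end{equation*}
where $P_{\beta,\delta}$ has at most polynomial growth. This follows from the Fa\`a di Bruno (chain-rule) formula: every derivative $\partial_q^\beta\partial_p^\delta e^{-tH}$ equals $e^{-tH}$ multiplied by a finite sum of products of derivatives $\partial_q^{\beta'}\partial_p^{\delta'}(-tH)$ with $|\beta'|\le|\beta|$ and $|\delta'|\le|\delta|$. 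Using $|e^{-tH(q,p)}|\le1$ and the hypothesis \eqref{coefficients2}, each such factor is bounded uniformly in $q$ by the polynomially growing function $t\,f_{\delta',\beta'}(p)$, and a product of finitely many polynomially growing functions is again polynomially growing. I expect this uniform-in-$q$ bound---which hinges crucially on $\Re H\ge0$ to keep the exponential factor bounded---to be the main point of the argument; everything else is bookkeeping.

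With this estimate in hand the conclusion is routine. The polynomial domination lets me differentiate \eqref{F(t)} under the integral sign; by Leibniz,
\begin{equation*}
\partial_q^\alpha\bigl(F(t)\ffi\bigr)(q)=(2\pi)^{-d/2}\intl_{\rn}e^{ip\cdot q}\sum_{\beta\le\alpha}\binom{\alpha}{\beta}(ip)^{\alpha-\beta}\,\partial_q^\beta e^{-tH(q,p)}\,\widehat\ffi(p)\,dp.
\end{equation*}
To produce the factor $q^\gamma$ I use $q^\gamma e^{ip\cdot q}=(-i)^{|\gamma|}\partial_p^\gamma e^{ip\cdot q}$ and integrate by parts $|\gamma|$ times in $p$; the boundary terms vanish because the non-exponential factor is a product of a polynomially growing symbol derivative with the rapidly decreasing $\widehat\ffi$ and its derivatives. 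Applying Leibniz to the resulting $\partial_p^\gamma$, every summand is a product of a polynomially growing function of $p$ (from the monomial $(ip)^{\alpha-\beta}$ and the symbol derivatives $\partial_q^\beta\partial_p^\delta e^{-tH}$, controlled by the estimate above) with a Schwartz function (a derivative of $\widehat\ffi$). Hence the integrand is dominated, uniformly in $q$, by a rapidly decreasing function of $p$, and since $|e^{ip\cdot q}|=1$ the integral is bounded independently of $q$. This gives $\sup_{q\in\rn}\left|q^\gamma\,\partial_q^\alpha\bigl(F(t)\ffi\bigr)(q)\right|<\infty$ for all $\alpha,\gamma$, i.e.\ $F(t)\ffi\in S(\rn)$.
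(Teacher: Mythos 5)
Your proposal is correct and follows essentially the same route as the paper's proof: differentiate \eqref{F(t)} under the integral, apply Leibniz to split $\partial_q$ between $e^{ip\cdot q}$ and $e^{-tH(q,p)}$, convert the monomial $q^\gamma$ into $\partial_p^\gamma$ acting on $e^{ip\cdot q}$, integrate by parts in $p$, and dominate the resulting integrand uniformly in $q$ by an $L_1$ function built from \eqref{coefficients2} and the rapid decay of $\widehat\ffi$. The only difference is cosmetic: you spell out via Fa\`a di Bruno and $|e^{-tH}|\le 1$ the uniform polynomial bound on $\partial_q^\beta\partial_p^\delta e^{-tH}$, a step the paper uses implicitly when it asserts the final integrand is bounded by a $q$-independent $L_1$ function.
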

\begin{proof}%%%\noindent {\bfseries Proof:}
 By Lemma \ref{contraction} we have
$F(t)\ffi\in C_\infty(\real^d)$. Let us show that for all
$\alpha,\,\beta\in \mathds{N}^d_0$  the norm
$$\|F(t)\ffi\|_{\alpha,\beta}=\sup_{q\in\real^d}\big| q^\alpha\partial^\beta_q[F(t)\ffi](q)
\underline{}\big|$$ is finite. Note that for any $\beta\in
\mathds{N}^d_0$ the function $\partial^\beta_q e^{-tH(q,p)+ip\cdot q}$ is
continuous. By  \eqref{coefficients2} it is also majorized
(uniformly for all $q$) by some continuous function of $p$ which has
at most polynomial growth at infinity. Hence, by  Lebesgue's
dominated convergence theorem, we have
\begin{align*}
q^\alpha\partial^\beta_q[F(t)\ffi](q)&=(2\pi)^{-d/2}\intl_{\real^d}q^\alpha\partial^\beta_q
e^{-tH(q,p)+ip\cdot q}\widehat{\ffi}(p)dp\\
&=(2\pi)^{-d/2}\sum_{0\le\gamma\le\beta}\,\intl_{\real^d}q^\alpha
\partial^\gamma_q(e^{ip\cdot q})\partial^{\beta-\gamma}_q(e^{-tH(q,p)})\widehat{\ffi}(p)dp.
\end{align*}
Since $\partial^\gamma_q(e^{ip\cdot q})=e^{ip\cdot q}R_\gamma(p)$, where $R_\gamma$
is a polynomial of $p$, we can use integration by parts and get
\begin{align*}
q^\alpha\partial^\beta_q[F(t)\ffi](q)&=(2\pi)^{-d/2}\sum_{0\le\gamma\le\beta}\,\intl_{\real^d}q^\alpha
e^{ip\cdot q}\big[R_\gamma(p)\partial^{\beta-\gamma}_q(e^{-tH(q,p)})\widehat{\ffi}(p)\big]dp\\
&=
(2\pi)^{-d/2}\sum_{0\le\gamma\le\beta}i^{|\alpha|}\intl_{\real^d}\partial^\alpha_p
e^{ip\cdot q}\big[R_\gamma(p)\partial^{\beta-\gamma}_q(e^{-tH(q,p)})\widehat{\ffi}(p)\big]dp\\
&=
(2\pi)^{-d/2}\sum_{0\le\gamma\le\beta}(-i)^{|\alpha|}\intl_{\real^d}
e^{ip\cdot q}\partial^\alpha_p\big[R_\gamma(p)\partial^{\beta-\gamma}_q(e^{-tH(q,p)})\widehat{\ffi}(p)\big]dp.
\end{align*}
Since $\partial^\alpha_p\big[R_\gamma(p)\partial^{\beta-\gamma}_q(e^{-tH(q,p)})\widehat{\ffi}(p)\big]$
is bounded by an $L_1$-function which is independent of $q$, we can use \eqref{coefficients2} to see that the expression in the last line is finite. Hence,
the norm $\|F(t)\ffi\|_{\alpha,\beta} $ is finite.
\end{proof}

\begin{remark}\label{approx} If in Assumption A (i) we require the
existence of not just a strongly continuous but a Feller semigroup,
we obtain a Hamiltonian Feynman formula for the corresponding Feller
process $\xi_t$. Besides, for each fixed $n$ the operator
$\big[F(t/n)\big]^n$ in the Hamiltonian  Feynman formula corresponds
to the approximation of the process $\xi_t$ by a Markov chain
$\big\{ Y^{t/n}(k) \big\}_{k=0}^n$ with L\'{e}vy increments. This
Markov chain is obtained by  splitting  the time interval $[0,t]$
onto $n$ equal steps and  ``freezing''  the coefficient $q$  in the
transition probabilities of $\xi_t$ at each step (cf.
\cite{schilling-boettcher}). Moreover, the transition kernels
$\mu_{q,t/n}$ of this Markov chain correspond to the transition
operator $W_{t/n}$,  $W_{t/n}\ffi(q)=\int_{\real^d}\ffi(y)\mu_{q,
t/n}(dy)$. Hence,  $\big[F(t/n)\big]^n=\big[W_{t/n}\big]^n$.
This allows us to transform the obtained Hamiltonian Feynman formula
for the Feller semigroup $T_t$ associated with the process $\xi_t$
into a Lagrangian Feynman formula
$T_t\ffi(q)=\lim_{n\to\infty}\big[W_{t/n}\big]^n\ffi(q)$.
\end{remark}
\begin{example}
Let us consider the symbol $H_1(q,p)=a(q)|p|^\alpha$, where
$\alpha\in(0,2]$ and $a(\cdot)\in C^\infty(\rn)$ is  a strictly
positive and bounded function. Then $-H_1(\bu,D)$ generates a Feller
semigroup $(T^1_t)_{t\ge 0}$ (see \cite{schilling-schnurr}).
If $\alpha=2$ this semigroup corresponds to the  process of
diffusion with  variable diffusion coefficient. All conditions of
the Theorem \ref{HFF} are fulfilled
 and  by the Hamiltonian Feynman
formula \eqref{hff}   for any $\ffi\in C_c^\infty(\real^d)$ and any
$q_0\in\real^d$ we have:
\begin{equation*}\begin{aligned}
&(T_t^1\ffi)(q_0)\\
&=\lim_{n\rightarrow\infty}\frac{1}{(2\pi)^{dn}}\int\limits_{\real^{2dn}}
e^{i\sum\limits_{k=1}^n p_k\cdot(q_{k-1}-q_k)}e^{-\frac{t}{n}\sum\limits_{k=1}^n
a(q_{k-1})|p_k|^\alpha}\ffi(q_n)dq_1dp_1\cdots dq_ndp_n.
\end{aligned}\end{equation*}
\end{example}
\begin{example}
Let us consider the symbol
$H_2(q,p)=\sqrt{|p|^\alpha+m^2(q)}-m(q)$, where $m(\cdot)\in
C^\infty(\real^d)$ is a strictly positive  and bounded function on $\real^d$,
$\alpha\in(0,2]$. If additionally    the function
$m(\cdot)$ is such that the Assumption A holds (e.g.
if $m\equiv\text{const}$), then the following Hamiltonian
Feynman formula is valid for the corresponding semigroup
$(T^2_t)_{t\ge 0}$:
\begin{multline*}
(T_t^2\ffi)(q_0)
=\lim_{n\rightarrow\infty}\frac{1}{(2\pi)^{dn}}\int\limits_{\real^{2dn}}
e^{i\sum\limits_{k=1}^n p_k\cdot(q_{k-1}-q_k)}
e^{-\frac{t}{n}\sum\limits_{k=1}^n
\sqrt{|p_k|^\alpha+m^2(q_{k-1})}-m(q_{k-1})}\times\\
\times\ffi(q_n)dq_1dp_1\cdots dq_ndp_n,
\end{multline*}
 where
 $\ffi\in C_c^\infty(\real^d)$ and  $q_0\in\real^d$.  In the case $\alpha=2$ the operator $H_2(\bu,D)$
 can be considered as a Hamiltonian of a free relativistic (quasi-)particle
 with  variable mass (cf. \cite{GS}, \cite{IchTam86}).
\end{example}

\section{\bfseries Lagrangian Feynman formula for  multiplicative perturbations of Feller semigroups}
Let $H(\cdot,D)$ be a $\Psi$DO with the symbol $H(q,p)$.
Assume that $-H(\cdot,D)$ generates a Feller semigroup $(T_t)_{t\ge0}$,
$$
T_t\ffi(q)=\intl_{\real^d}\ffi(y)p_t(q,dy),
$$
where  $p_t(q,dy)=\mathds{P}^q(X_t\in dy)$ is the transition probability of the underlying Feller process $X_t$,  $\ffi\in C_\infty(\real^d)$.
Let $a(\cdot):\real^d\to [c,1/c]$, $c>0$, be a continuous function. Then (due to \cite{dorroh}) a $\Psi$DO $-\widetilde{H}(\cdot,D)$ with the symbol $-\widetilde{H}(q,p)=-a(q)H(q,p)$  generates a strongly continuous  semigroup $(\widetilde{T}_t)_{t\ge0}$ on the space $C_\infty(\real^d)$.

Consider now a family $(\widetilde{F}(t))_{t\ge0}$ of operators on $C_\infty(\real^d)$ defined by the formula:
\begin{equation}\label{F(t)-LFF}
\widetilde{F}(t)\ffi(q)=(T_{a(q)t}\ffi)(q)\equiv\intl_{\real^d}\ffi(y)p_{a(q)t}(q,dy)
\end{equation}
\begin{theorem}\label{LFF!!!}
The family $(\widetilde{F}(t))_{t\ge0}$ given by the formula \eqref{F(t)-LFF} is Chernoff equivalent to the semigroup $(\widetilde{T}_t)_{t\ge0}$ generated by  a $\Psi$DO $-\widetilde{H}(\cdot,D)$ with the symbol $-\widetilde{H}(q,p)=-a(q)H(q,p)$ and, hence, the   Lagrangian  Feynman
formula
\begin{equation} \label{lFF}\begin{aligned}
\widetilde{T}_t\ffi(q_0)=
\lim_{n\to\infty}\intl_{\real^d}\cdots \intl_{\real^d}\ffi(q_n)
p_{a(q_0)t/n}&(q_0,dq_1)p_{a(q_1)t/n}(q_1,dq_2)\cdots\\
&\cdots p_{a(q_{n-1})t/n}(q_{n-1},dq_n)
\end{aligned}\end{equation}
is valid in $L(C_\infty(\real^d))$  locally
uniformly with respect to $t \ge 0$.
\end{theorem}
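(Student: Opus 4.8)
The plan is to verify the hypotheses of Chernoff's theorem (Theorem \ref{Chernoff}) for the family $(\widetilde{F}(t))_{t\ge0}$ on the Banach space $X = C_\infty(\real^d)$. There are four things to check: (1) each $\widetilde{F}(t)$ is a bounded operator satisfying the norm bound $\|\widetilde{F}(t)\|\le e^{at}$; (2) strong continuity $t\mapsto\widetilde{F}(t)$ with $\widetilde{F}(0)=\id$; (3) that $\widetilde{F}'(0)$ agrees with $-\widetilde{H}(\cdot,D) = -a(\cdot)H(\cdot,D)$ on a core; and (4) that this operator is the generator of the semigroup $(\widetilde{T}_t)_{t\ge0}$, which is already guaranteed by the cited result of \cite{dorroh}. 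Once these are in place, the Feynman formula \eqref{lFF} follows because $\big[\widetilde{F}(t/n)\big]^n$ is precisely the $n$-fold iterated integral on the right-hand side: applying $\widetilde{F}(t/n)$ repeatedly and unwinding the definition \eqref{F(t)-LFF} produces the chain of transition kernels $p_{a(q_0)t/n}(q_0,dq_1)\cdots p_{a(q_{n-1})t/n}(q_{n-1},dq_n)$.

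**Contraction property and strong continuity.**

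First I would prove the norm bound. Since $(T_t)_{t\ge0}$ is a Feller semigroup, each $T_s$ is a positivity-preserving contraction, so $\big|(T_{a(q)t}\ffi)(q)\big|\le \|\ffi\|_\infty$ for every $q$; taking the supremum over $q$ gives $\|\widetilde{F}(t)\ffi\|_\infty\le\|\ffi\|_\infty$, i.e. each $\widetilde{F}(t)$ is a contraction (so $a=0$ in Chernoff's theorem). One should also check that $\widetilde{F}(t)$ genuinely maps $C_\infty(\real^d)$ into itself: continuity of $q\mapsto(T_{a(q)t}\ffi)(q)$ follows from joint continuity of $(s,q)\mapsto(T_s\ffi)(q)$ together with continuity of $a(\cdot)$, and the vanishing at infinity uses that $a$ takes values in the compact interval $[c,1/c]$, so the relevant times $a(q)t$ stay in $[ct,t/c]$ and one can control $\widetilde{F}(t)\ffi$ uniformly. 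For strong continuity at $t=0$, for fixed $\ffi$ one estimates $\big|(T_{a(q)t}\ffi)(q)-\ffi(q)\big|$; because $a(q)t\le t/c\to0$ uniformly in $q$, the strong continuity of $(T_s)_{s\ge0}$ gives $\|\widetilde{F}(t)\ffi-\ffi\|_\infty\to0$.

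**The derivative at zero — the main obstacle.**

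The crux is identifying $\widetilde{F}'(0)$. For $\ffi$ in a suitable core (I would take $C_c^\infty(\real^d)$, or the domain $D(A)$ of the generator $A=-H(\cdot,D)$), I would write
\begin{equation*}
\frac{\widetilde{F}(t)\ffi(q)-\ffi(q)}{t}
= \frac{(T_{a(q)t}\ffi)(q)-\ffi(q)}{t}
= a(q)\,\frac{(T_{a(q)t}\ffi)(q)-\ffi(q)}{a(q)t},
\end{equation*}
and the inner difference quotient should converge to $(A\ffi)(q) = -H(q,D)\ffi(q)$ as $t\to0$, giving the pointwise limit $a(q)(A\ffi)(q) = -\widetilde{H}(\cdot,D)\ffi(q)$. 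The difficulty is that the convergence $\big(T_s\ffi-\ffi\big)/s\to A\ffi$ must be upgraded to convergence in $\sup$-norm \emph{uniformly over the $q$-dependent time $s=a(q)t$}, since $a(q)$ varies with the same $q$ at which the function is evaluated. I would handle this by using that $s\mapsto T_s\ffi$ is continuously differentiable with $\frac{d}{ds}T_s\ffi = T_s A\ffi$, so that $T_s\ffi-\ffi = \int_0^s T_r A\ffi\,dr$ and hence
\begin{equation*}
\frac{\widetilde{F}(t)\ffi(q)-\ffi(q)}{t}-a(q)A\ffi(q)
= \frac{a(q)}{a(q)t}\intl_0^{a(q)t}\big[(T_r A\ffi)(q)-(A\ffi)(q)\big]\,dr.
\end{equation*}
Since $a(q)\in[c,1/c]$ the outer factor $a(q)$ is bounded, the averaging time $a(q)t\le t/c$ tends to zero uniformly, and the integrand is controlled by $\sup_{0\le r\le t/c}\|T_r A\ffi - A\ffi\|_\infty$, which vanishes as $t\to0$ by strong continuity of $(T_r)$ applied to the fixed element $A\ffi$. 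This yields the uniform estimate $\big\|t^{-1}(\widetilde{F}(t)\ffi-\ffi)+\widetilde{H}(\cdot,D)\ffi\big\|_\infty\to0$. Thus $\widetilde{F}'(0)\supseteq -\widetilde{H}(\cdot,D)$ on the core; since $-\widetilde{H}(\cdot,D)$ generates $(\widetilde{T}_t)$ by \cite{dorroh} and $C_c^\infty(\real^d)$ is a core for it, Chernoff's theorem applies and delivers \eqref{lFF}.
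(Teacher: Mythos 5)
Your proposal is correct and takes essentially the same route as the paper's proof: you verify the Chernoff hypotheses with the contraction bound and strong continuity coming from $a(q)\in[c,1/c]$, and you resolve the key derivative-at-zero step through the integral representation $T_s\ffi-\ffi=\int_0^s T_r A\ffi\,dr$ together with strong continuity applied to the fixed element $A\ffi$, which is precisely the paper's central estimate (the paper merely phrases it via frozen-coefficient operators $F^{q_0}(t)$ and a supremum over $q_0$, while you estimate directly at the varying point $q$). Your choice of $D(A)$ as the core also matches the paper, which works with $\ffi\in D(-H(\cdot,D))$.
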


\begin{remark}
The transformation of a Feller process under which a symbol
$H(q,p)$ transfers into  a symbol $a(q)H(q,p)$ may be understood as
a position-dependent time re-scaling of the process: $t\rightsquigarrow
a(q)t$. Indeed, for each random variable $X_{a(q)t}$ we have:
$$
\frac{\mathds{E}^q\big[e^{i(X_{a(q)t}-q)\cdot p}  \big]-1}{t}
=a(q)\frac{\mathds{E}^q\big[e^{i(X_{a(q)t}-q)\cdot p}  \big]-1}{a(q)t}
\longrightarrow -a(q)H(q,p),\quad t\to 0.
$$
 Note, that $p(t,x,dy)=p_{a(x)t}(x,dy)$ is NOT the transition
function of the re-scaled process.

If the explicit form of the transition density of the original
Feller  process is known then the Lagrangian Feynman formula
\eqref{lFF}  for the time re-scaled process  contains only explicit---usually
elementary---functions.
\end{remark}

\begin{proof}
First, let us prove that the family $(\widetilde{F}(t))_{t\ge0}$ acts in the space $C_\infty(\mathbb{R}^d)$. For any fixed $\ffi\in C_\infty(\mathbb{R}^d)$  we have
\begin{align*}
\lim\limits_{q\to q_0}|&\widetilde{F}(t)\ffi(q)-\widetilde{F}(t)\ffi(q_0)|\\
&=\lim\limits_{q\to q_0}|(T_{a(q)t}\ffi)(q)-(T_{a(q_0)t}\ffi)(q_0)|\\
&\le\lim\limits_{q\to q_0}\bigg(|(T_{a(q)t}\ffi)(q)-(T_{a(q_0)t}\ffi)(q)|+|(T_{a(q_0)t}\ffi)(q)-(T_{a(q_0)t}\ffi)(q_0)|\bigg)\\
&
\le \lim\limits_{q\to q_0}\|[T_{a(q)t}-T_{a(q_0)t}]\ffi\|_\infty+\lim\limits_{q\to q_0}|(T_{a(q_0)t}\ffi)(q)-(T_{a(q_0)t}\ffi)(q_0)|\\
&=0.
\end{align*}
Therefore, the function $\widetilde{F}(t)\ffi$ is continuous. Since
\begin{align*}
\lim\limits_{|q|\to\infty}|\widetilde{F}(t)\ffi(q)|
&=\lim\limits_{|q|\to\infty}|(T_{a(q)t}\ffi)(q)|\\
&\le\lim\limits_{|q|\to\infty} \sup\limits_{q_0\in \mathbb{R}^d}|T_{a(q_0)t}\ffi(q)|\\
&=\lim\limits_{|q|\to\infty} \sup\limits_{s\in[ct,t/c]}|T_{s}\ffi(q)|,
\end{align*}
we get that $\widetilde{F}(t):C_\infty(\mathbb{R}^d)\to C_\infty(\mathbb{R}^d)$.

Since  the semigroup   $(T_t)_{t\ge0}$ acts in $C_\infty(\mathbb{R}^d)$,  then for any  $t\ge0$ and $\eps>0$  there exists  $R_{\eps,t}>0$ such that for any  $q\in \mathbb{R}^d \,:\,|q|>R_{\eps,t}$ the inequality  $|T_t\ffi(q)|<\frac12\eps$ holds. Due to the strong continuity of $(T_t)_{t\ge0}$ there exists  $\delta_\eps>0$  such that for all  $\tau,\tau'\in[ct,t/c]$ with  $|\tau-\tau'|<\delta_\eps$ the inequality  $\|T_\tau\ffi-T_{\tau'}\ffi\|_\infty<\frac12\eps$ holds.
Let us fix  $\eps>0$. Consider  a partition  $\tau_0=ct<\tau_1<\ldots<\tau_N=t/c$ of a segment  $[ct,t/c]$ such that  $\max\limits_{1\le k\le N}|\tau_k-\tau_{k-1}|<\delta_\eps$. Then for any  $\tau\in[ct,t/c]$ ther exists  $\tau_k$ with  $|\tau-\tau_k|<\delta_\eps$. Let now $R_\eps=\max\limits_{0\le k\le N}R_{\eps,\tau_k}$. Then for any  $q\in \mathbb{R}^d\,:\, |q|>R_\eps$ and any  $\tau\in[ct,t/c]$  we have
\begin{align*}
|T_\tau\ffi(q)|&\le|T_\tau\ffi(q)-T_{\tau_k}\ffi(q)|+ |T_{\tau_k}\ffi(q)|\\
&
\le \|T_{\tau}\ffi-T_{\tau_k}\ffi\|_\infty+ |T_{\tau_k}\ffi(q)|\\
&
\le\frac12\eps+\frac12\eps=\eps.
\end{align*}
Therefore, $\lim\limits_{|q|\to\infty} \sup\limits_{\tau\in[c_1t,c_2t]}|T_\tau\ffi(q)|=0$ and, hence,  the function   $\widetilde{F}(t)\ffi$ vanishes at infinity.

Further we use a freezing-in technique.  For each $q_0\in\real^d$ consider a family of operators $(F^{q_0}(t))_{t\ge 0}$  on $C_\infty(\real^d)$ such that
\begin{equation*}
F^{q_0}(t)\ffi(q)=\intl_{\real^d}\ffi(y)p_{a(q_0)t}(q,dy)\equiv (T_{a(q_0)t}\ffi)(q).
\end{equation*}
Then $\widetilde{F}(t)\ffi(q)=F^q(t)\ffi(q)$ for all $\ffi\in C_\infty(\real^d)$, $q\in\real^d$. Moreover,  $\widetilde{F}(0)=T_0=\id$ and, as $(T_t)_{t\ge 0}$ is a contraction semigroup, we have
\begin{align*}
\|\widetilde{F}(t)\ffi\|_\infty&\le \sup_{q_0\in\real^d}\sup_{q\in\real^d}\big|F^{q_0}(t)\ffi(q)\big|\\
&=\sup_{q_0\in\real^d}\|T_{a(q_0)t}\ffi\|_\infty\\
&\le \|\ffi\|_\infty.
\end{align*}
The family $(\widetilde{F}(t))_{t\ge0}$ is strongly continuous since
\begin{align*}
\lim_{t\to0}\|\widetilde{F}(t)\ffi-\ffi\|_\infty&=\lim_{t\to0}\sup_{q\in\real^d}|F^q(t)\ffi(q)-\ffi(q)|\\
&\le \lim_{t\to0} \sup_{q_0\in\real^d}\sup_{q\in\real^d}|F^{q_0}(t)\ffi(q)-\ffi(q)|\\
&=\lim_{t\to0} \sup_{q_0\in\real^d}\|T_{a(q_0)t}\ffi-\ffi\|_\infty\\
&\leq \lim_{t\to0} \sup_{a\in[c,1/c]}\|T_{at}\ffi-\ffi\|_\infty\\
&=0.
\end{align*}
And for all $\ffi\in D(-H(\cdot,D))$ we have
\begin{align*}
\bigg\|\frac{\widetilde{F}(t)\ffi-\ffi}{t}&+\widetilde{H}(\cdot,D)\ffi\bigg\|_\infty\\
&=
\sup_{q\in\real^d}\bigg|\frac{{F^q}(t)\ffi(q)-\ffi(q)}{t}+a(q){H}(q,D)\ffi(q)\bigg|\\
&\le\sup_{q_0\in\real^d} \sup_{q\in\real^d}\bigg|\frac{F^{q_0}(t)\ffi(q)-\ffi(q)}{t}+a(q_0){H}(q,D)\ffi(q)\bigg|\\
&=\sup_{q_0\in\real^d} \sup_{q\in\real^d}\bigg| \frac{-1}{a(q_0)t}\intl_0^{a(q_0)t}a(q_0) H(q,D)(T_\tau\ffi(q)-\ffi(q))d\tau  \bigg|\\
%&\le \frac1t \int_0^{a(q_0)t}| H(q,D)(T_\tau\ffi(q)-\ffi(q))|d\tau\\
&\le  \frac1t \int_0^{t/c}\| H(q,D)(T_\tau\ffi(q)-\ffi(q))\|_\infty d\tau\\
&\longrightarrow 0,\quad t\to0.
\end{align*}
Therefore, all assumptions of Chernoff's theorem are fulfilled and, hence, the family $(\widetilde{F}(t))_{t\ge0}$ is Chernoff equivalent to the semigroup $(\widetilde{T}_t)_{t\ge0}$ generated by a $\Psi$DO $-\widetilde{H}(\cdot,D)=-a(\cdot)H(\cdot,D)$.
\end{proof}

\begin{remark}
One can show that (the appropriate modification of) Theorem \ref{LFF!!!} remains true  for multiplicative perturbations of not necessary  Feller but just  strongly continuous semigroups on $C_\infty(\real^d)$.
\end{remark}
\begin{remark}
If the symbol $-H(q,p)$ of a $\Psi$DO $-H(\cdot,D)$ generating a Feller semigroup $(T_t)_{t\ge0}$ satisfies the assumptions of Theorem \ref{HFF}, then the symbol $-\widetilde{H}(q,p)=-a(q)H(q,p)$ (with continuous $a(\cdot):\real^d\to[c,1/c]$, $c>0$) also satisfies the assumptions of this Theorem. Hence, the Hamiltonian Feynman formula obtained in Theorem \ref{HFF} remains valid for the perturbed semigroup $(\widetilde{T}_t)_{t\ge0}$ as well.
\end{remark}
\begin{example}[diffusion with variable diffusion
coefficient]\label{diffusion} Let $\psi(p)=\frac12|p|^2$ be the
characteristic exponent of a  Brownian motion in
$\real^d$. The generator of Brownian motion is
$-\psi(D)=\frac12\Delta$. The transition density is given by
Gaussian density $$p_t^{BM}(x)=(2\pi
t)^{-d/2}\exp\bigg\{-\frac{|x|^2}{2t}\bigg\}.$$ Consider the
semigroup $(\widetilde{T}_t)_{t\ge0}$, generated by a $\Psi$DO
$-{\widetilde{H}}(\cdot,D)$ with the symbol
$-\widetilde{H}(q,p)=-\frac12a(q)|p|^2$, where $a(\cdot)$ is as
before. Then by Theorem \ref{LFF!!!} for each $\ffi\in C_\infty(\real^d)$
we have (cf. \cite{BGS1}, \cite{BGS2}):
\begin{multline*} \widetilde{T}_t\ffi(q_0)=
\lim_{n\to\infty}\intl_{\real^d}\cdots \intl_{\real^d}(2\pi
a(q_0)t/n)^{-d/2}\exp\bigg\{-\frac{|q_0-q_1|^2}{2a(q_0)t/n}\bigg\}\cdots \\\cdots (2\pi
a(q_{n-1})t/n)^{-d/2}\exp\bigg\{-\frac{|q_{n-1}-q_n|^2}{2a(q_{n-1})t/n}\bigg\}
\ffi(q_n)dq_1\cdots dq_n
\end{multline*}
\end{example}
\begin{example}[Cauchy type process with variable coefficient] Let
$\psi(p)=|p|$ be the characteristic exponent of the Cauchy process
in $\real^d$. The generator of the Cauchy process is
$-\psi(D)=-\sqrt{-\Delta}$. The transition density is given by the
formula
$$p_t(x)=\Gamma\bigg(\frac{d}{2}+\frac12\bigg)\frac{t}{[\pi|x|^2+t^2]^{(d+1)/2}},$$
where $\Gamma(\cdot)$ is  Euler's Gamma function.

Consider the semigroup $(\widetilde{T}_t)_{t\ge0}$, generated by a
$\Psi$DO $-{\widetilde{H}}(\cdot,D)$ with the symbol
$-\widetilde{H}(q,p)=-a(q)|p|$, where $a(\cdot)$ is as before. Then
by Theorem \ref{LFF!!!} for each $\ffi\in C_\infty(\real^d)$ we have (cf.
\cite{BShS}):
\begin{multline*}
\widetilde{T}_t\ffi(q_0)=
\lim_{n\to\infty}\intl_{\real^d}\cdots \intl_{\real^d}
\bigg[\Gamma\bigg(\frac{d}{2}+\frac12\bigg)\bigg]^n
\frac{a(q_0)t/n}{[(a(q_0)t/n)^2+(\pi|q_0-q_1|)^2]^{(d+1)/2}}\cdots\\
\cdots\frac{a(q_{n-1})t/n}{[(a(q_{n-1})t/n)^2+(\pi|q_{n-1}-q_n|)^2]^{(d+1)/2}}\ffi(q_n)dq_1\cdots dq_n.
\end{multline*}
\end{example}

\section{Feynman formulae for additive perturbations of semigroups}

\begin{theorem}\label{TH5.1}
Let $X$ be a Banach space with a norm $\|\cdot\|_X$. Let
$(T_k(t))_{t\ge0}$, $k=1,\ldots,m,$   be  strongly continuous semigroups on $X$ with
generators $(A_k,D(A_k))$ respectively.
Assume that $A=A_1+\cdots +A_m$ with   domain $D(A)=\cap_{k=1}^m D(A_k)$ is closable and that the closure is the generator of a strongly continuous
semigroup $(T(t))_{t\ge0}$  on $X$.
Let  $(F_k(t))_{t\ge0}$, $k=1,\ldots ,m,$ be families of operators in $X$  which are
Chernoff equivalent to the semigroups $(T_k(t))_{t\ge0}$ respectively, i.e. for each $k\in\{1,\ldots ,m \}$ we have
$F_k(0)=\id$,  $\|F_k(t)\|\le e^{a_kt}$ for some $a_k>0$ and there is a set
$D_k\subset D(A_k)$, which is a core for $A_k$, such that  $\lim_{t\to0}\big\|
\frac{F_k(t)\ffi-\ffi}{t}-A_k\ffi\big\|_X=0$ for each $\ffi\in D_k$.  Assume that there exists  a set $D\subset \cap_{k=1}^m D_k$ which is a core for $A$.
Then the family  $(F(t))_{t\ge0}$, where $F(t)=F_1(t)\circ\cdots \circ F_m(t)$
is Chernoff equivalent to the semigroup $(T(t))_{t\ge0}$ and, hence, the Feynman formula
$$
T_t=\lim_{n\to\infty} \big[F(t/n)   \big]^n
$$
is valid in the strong operator topology locally uniformly with respect to $t\ge 0$.
\end{theorem}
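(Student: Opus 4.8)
The plan is to verify that the composed family $(F(t))_{t\ge0}$ with $F(t)=F_1(t)\circ\cdots\circ F_m(t)$ satisfies every hypothesis of Chernoff's theorem (Theorem \ref{Chernoff}), the sought limiting generator being $A$; the Feynman formula is then immediate. The value $F(0)=\id$ is clear since each $F_k(0)=\id$, and submultiplicativity of the operator norm yields $\|F(t)\|\le\prod_{k=1}^m\|F_k(t)\|\le e^{at}$ with $a=a_1+\cdots+a_m\ge0$. For the (strong) continuity of $t\mapsto F(t)$ I would argue by induction on the number of factors: a single difference $F_1(t)\cdots F_m(t)\ffi-F_1(s)\cdots F_m(s)\ffi$ splits into a term in which only the leading factor is varied and a term in which the remaining product is varied, and each is controlled by combining the local boundedness $\|F_k(t)\|\le e^{a_kt}$ of the leading factors with the strong continuity of the single family being moved (the latter being part of the Chernoff equivalence of each $(F_k(t))_{t\ge0}$). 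The same mechanism shows $F_1(t)\cdots F_j(t)\to\id$ strongly as $t\to0$ for every $j$.

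The heart of the argument is the identification of $F'(0)$ on the core $D\subset\bigcap_{k=1}^m D_k$. For this I would use the telescoping identity
\begin{equation*}
\frac{F(t)\ffi-\ffi}{t}=\sum_{k=1}^m F_1(t)\cdots F_{k-1}(t)\,\frac{\big(F_k(t)-\id\big)\ffi}{t},
\end{equation*}
valid for all $\ffi\in X$, where for $k=1$ the empty product $F_1(t)\cdots F_{k-1}(t)$ is read as $\id$. For $\ffi\in D$ the Chernoff equivalence of $(F_k(t))_{t\ge0}$ gives $t^{-1}(F_k(t)-\id)\ffi\to A_k\ffi$, while the leading product tends strongly to $\id$ and is bounded by $e^{(a_1+\cdots+a_{k-1})t}$. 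Setting $\psi_t=t^{-1}(F_k(t)-\id)\ffi$ and $\psi_0=A_k\ffi$, I would then estimate
\begin{equation*}
\big\|F_1(t)\cdots F_{k-1}(t)\psi_t-\psi_0\big\|_X\le\|F_1(t)\cdots F_{k-1}(t)\|\,\|\psi_t-\psi_0\|_X+\big\|F_1(t)\cdots F_{k-1}(t)\psi_0-\psi_0\big\|_X,
\end{equation*}
where the first summand vanishes in the limit because a uniform bound multiplies $\|\psi_t-\psi_0\|_X\to0$, and the second vanishes because the product converges strongly to $\id$ against the \emph{fixed} vector $\psi_0$. Summation over $k$ yields $F'(0)\ffi=\sum_{k=1}^m A_k\ffi=A\ffi$ for every $\ffi\in D$, so in particular $D\subset D(F'(0))$.

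Finally I would match this with the closability requirement of Theorem \ref{Chernoff}. As a generator, $A$ is closed, hence the restriction $F'(0)|_D=A|_D$ is closable; since $D$ is by hypothesis a core for $A$, the closure of $A|_D$ is $A$ itself, the generator of $(T(t))_{t\ge0}$. All hypotheses of Chernoff's theorem are thereby met and we obtain $T(t)=\lim_{n\to\infty}[F(t/n)]^n$ in the strong operator topology, locally uniformly in $t$. The step I expect to be the main obstacle is precisely the passage to the limit inside the telescoping sum: one is letting a product of \emph{operators} converging strongly to the identity act on \emph{vectors} $\psi_t$ that are themselves moving, and this is legitimate only because of the uniform operator bound and the two-term splitting displayed above; strong convergence without uniform boundedness would not justify it.
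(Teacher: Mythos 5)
Your proposal is correct and takes essentially the same approach as the paper: the paper's proof peels off the last factor $F_m(t)$ recursively, which is precisely your telescoping identity in unrolled form, combined with the same bound $\|F(t)\|\le e^{(a_1+\cdots+a_m)t}$ and an application of Chernoff's theorem on the core $D$. The only difference is that you make explicit the two-term splitting (uniform operator bound against $\|\psi_t-\psi_0\|_X$, plus strong convergence of $F_1(t)\cdots F_{k-1}(t)$ to $\id$ on the fixed vector $A_k\ffi$) that the paper's inequality chain uses implicitly.
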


\begin{proof}%%%\noindent {\bfseries Proof:}
Obviously, the family $(F(t))_{t\ge0}$ is strongly continuous, $F(0)=\id$ and
$$\|F(t)\|\le \|F_1(t)\|\cdot\ldots \cdot\|F_m(t)\|\le e^{(a_1+\cdots +a_m)t}.$$ Let  $D\subset \cap_{k=1}^m D_k$  be a core for $A$. Then
for each $\ffi\in D$ we have
\begin{align*}
\lim_{t\to0}&\bigg\|
\frac{F(t)\ffi-\ffi}{t}-A\ffi\bigg\|_X\\
&=\lim_{t\to0}\bigg\|
\frac{F_1(t)\circ\cdots \circ F_m(t)\ffi-\ffi}{t}-A_1\ffi-\cdots -A_m\ffi\bigg\|_X\\
&=
\lim_{t\to0}\bigg\|
F_1(t)\circ\cdots \circ F_{m-1}(t)\circ\frac{F_m(t)\ffi-\ffi}{t}- A_m\ffi\\
&\phantom{= \lim_{t\to0}MM}+\frac{F_1(t)\circ\cdots \circ F_{m-1}(t)\ffi-\ffi}{t}- A_1\ffi-\cdots -A_{m-1}\ffi   \bigg\|_X\\
& \le
\lim_{t\to0}\bigg\|
\frac{F_1(t)\circ\cdots \circ F_{m-1}(t)\ffi-\ffi}{t}-A_1\ffi-\cdots -A_{m-1}\ffi\bigg\|_X\\
& \le \cdots \le
\lim_{t\to0}\bigg\|
\frac{F_1(t)\ffi-\ffi}{t}-A_1\ffi\bigg\|_X\\
&=0.
\qedhere
\end{align*}
\end{proof}

%%%\begin{remark}
Note, that if some of the $(T_k(t))_{t\ge0}$ are known explicitly and if $\|T_k(t)\|\le e^{a_kt}$ for some $a_k\ge0$ then  we can take $F_k(t)\equiv T_k(t)$ in the corresponding Feynman formulae.
%%%\end{remark}

\begin{example}[bounded Schr\"{o}dinger perturbations]
Let $X=C_\infty(\real^d)$ with the supremum norm, $(T_t)_{t\ge0}$  be a
strongly continuous  semigroup with a generator $(A,\, D(A))$, $(F(t))_{t\ge0}$ be
Chernoff equivalent to $(T_t)_{t\ge0}$. Let $V(\cdot):\real^d\to\real$ be a
bounded continuous function. Then an operator $A+V$, such that
$D(A+V)=D(A)$ and $(A+V)\ffi(q)=A\ffi(q)+V(q)\ffi(q)$ for all
$\ffi\in D(A+V)$, generates a strongly continuous semigroup
$(T^{A+V}_t)_{t\ge0}$ on $C_\infty(\real^d)$. By Theorem~\ref{TH5.1}
the Feynman formula
$T^{A+V}_t=\lim_{n\to\infty}\big[e^{\frac{t}{n}V}\circ F(t/n)\big]^n$
is valid. In particular, if  $\| T_t\|\le e^{at}$ for some $a\in[0,+\infty)$ and all $t\ge0$, then we have
$T^{A+V}_t=\lim_{n\to\infty}\big[e^{\frac{t}{n}V}\circ T_{t/n}\big]^n$.
In both formulae the operator $e^{tV}$ is an operator of
multiplication with the function $e^{tV}$.
\end{example}

\begin{example}[gradient perturbations]
Let again $X=C_\infty(\real^d)$ with the supremum norm.  Let
$b(\cdot):\real^d\to\real^d$ be a bounded continuous vector field and
$\nabla=(\frac{\partial}{\partial q_1},\ldots ,\frac{\partial}{\partial q_d} )$. Consider an
operator $b\nabla$, such that
${b\nabla}\ffi(q)=b(q)\nabla\ffi(q)$ for all $\ffi\in
D(b\nabla)$.
Consider a family $(S(t))_{t\ge0}$  of operators in
$C_\infty(\real^d)$ defined by the formula:
$$S(t)\ffi(q)=\ffi(q+tb(q)).
$$
Then  $(S(t))_{t\ge0}$  is Chernoff equivalent to the semigroup $e^{tb\nabla}$ generated by $b\nabla$.
Indeed, $S(0)=\id$, $\|S(t)\|=1$ and for all $\ffi\in D(b\nabla)$
\begin{align*}
\lim_{t\to0}\bigg\| \frac{S(t)\ffi-\ffi}{t}-b\nabla\ffi
\bigg\|_{\infty}&=\lim_{t\to0}\sup_{q\in\real^d}\bigg|
\frac{\ffi(q+tb(q))-\ffi(q)}{t}-b(q)\nabla\ffi(q) \bigg|\\
& \le
\lim_{t\to0}\,\, t \cdot\sup_{q\in\real^d,\,
s\in[0,t]}\big|b(q)\cdot \mathrm{Hess}\,\ffi(q+sb(q))b(q)\big|\\
&=0,
\end{align*}
where $\mathrm{Hess} \,\ffi$ is a Hessian of $\ffi$. Hence,
$\frac{d}{dt}S(t)\big|_{t=0}\ffi=b\nabla\ffi$ for each $\ffi\in D(b\nabla)$.

Let now  $(T_t)_{t\ge0}$ be a strongly continuous semigroup with a generator $(A, D(A))$ and
 a family  $(F(t))_{t\ge0}$ be Chernoff equivalent to  $(T_t)_{t\ge0}$. Consider an operator $A+b\nabla$
such that $(A+{b\nabla})\ffi(q)=A\ffi(q)+b(q)\nabla\ffi(q)$ for all $\ffi\in D(A+{b\nabla})=D(A)\cap D(b\nabla)$. Assume that
$A+{b\nabla}$ generates a strongly continuous semigroup $(T^{A+b\nabla}_t)_{t\ge0}$ on  $C_\infty(\real^d)$. Note, that the assumption holds, e.g. if the operator
$b\nabla$ is $A$-bounded, i.e. $D(A)\subset D(b\nabla)$ and for all $\ffi\in D(A)$, some $\lambda\in[0,1)$ and $\gamma\ge0$ the estimate
$$
\|b\nabla\ffi\|_X\le \lambda\|A\ffi\|_X+\gamma\|\ffi\|_X
$$
holds. In particular,  $b\nabla$ is $A$-bounded for   $A=-(-\Delta)^{\alpha/2}$, $\alpha\in(1,2]$.
%%%!!!!!!!!!!!!!!!!!!!!!!!!!!!!!!!!!!!!!!!!!!!!!!!!!!!!!!!!!!!!!!!!!!!

By Theorem \ref{TH5.1} the family $(S(t)\circ F(t))_{t\ge0}$ is Chernoff equivalent to the semigroup $(T^{A+b\nabla}_t)_{t\ge0}$  and the Feynman formula
$T^{A+b\nabla}_t=\lim_{n\to\infty} \big[S(t/n)\circ F(t/n)  \big]^n$ is valid.

\end{example}

\begin{corollary}[Hamiltonian Feynman formula for gradient and
bounded Schr\"{o}\-din\-ger perturbations of Feller semigroups]
\label{perturb} Let $b(\cdot)$, $V(\cdot)$ be as in the above examples. Let a function $H(q,p)$ be as
in Theorem \ref{HFF},   ${H}(\cdot,D)$ be a $\Psi$DO with the symbol
$H(q,p)$ (see formula \eqref{H}) and $F(t)$ be given by
the formula \eqref{F(t)}.  We assume that the function $H(q,p)$ satisfies
sufficient conditions for  $-H(\cdot, D)$ and $A:=-H(\cdot,D)+b\nabla+V$ to be closable and the closures to generate strongly continuous semigroups on $C_\infty(\real^d)$.   Then by Theorems \ref{HFF},  \ref{TH5.1} and due to the above examples  the following Hamiltonian Feynman formula is valid for
the semigroup $(T_t^{A})_{t\ge 0}$, generated by
$A=-H(\cdot, D)+b\nabla+V$:
\begin{align*}
(T_t^{A}\ffi)(q_0)
&=\lim_{n\rightarrow\infty}\big[e^{\frac{t}{n}V}\circ
S(t/n)\circ {F}(t/n)
\big]^n\ffi(q_0)\\
&=\lim_{n\rightarrow\infty}({2\pi})^{-dn}\intl_{(\real^d)^{2n}}
e^{\frac{t}{n}\sum_{k=1}^n V(q_{k-1})} e^{i\sum_{k=1}^n
p_k\cdot(q_{k-1}-q_k+\frac{t}{n}b(q_{k-1}))}\times\\
&\quad\quad\quad\quad\quad\quad\mbox{}\times
 e^{-\frac{t}{n}\sum_{k=1}^n
H(q_{k-1}+\frac{t}{n}b(q_{k-1}),p_k)}\ffi(q_n)dq_1dp_1\cdots dq_ndp_n,
\end{align*}
where again the integrals in the formula must be understood in a
proper sense (see Remark \ref{properSense}).
\end{corollary}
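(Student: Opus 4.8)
The plan is to realize $A=-H(\cdot,D)+b\nabla+V$ as a sum of three generators and to invoke the additive-perturbation result, Theorem \ref{TH5.1}, with $m=3$. First I would assemble the three constituent semigroups together with their Chernoff-equivalent families: by Theorem \ref{HFF}, the family $(F(t))_{t\ge0}$ from \eqref{F(t)} is Chernoff equivalent to the semigroup $(T_t)_{t\ge0}$ generated by the closure of $-H(\cdot,D)$; by the gradient-perturbation example, $S(t)\ffi(q)=\ffi(q+tb(q))$ is Chernoff equivalent to $e^{tb\nabla}$; and since $V$ is bounded and continuous, the multiplication semigroup $e^{tV}$ is strongly continuous on $C_\infty(\real^d)$, has bounded generator $V$, and is Chernoff equivalent to itself. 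Here I would set $A_1=V$, $A_2=b\nabla$, $A_3=-H(\cdot,D)$ with the matching families $F_1(t)=e^{tV}$, $F_2(t)=S(t)$, $F_3(t)=F(t)$, so that $A_1+A_2+A_3=A$ and $F_1\circ F_2\circ F_3=e^{tV}\circ S(t)\circ F(t)$.

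Second, I would verify the hypotheses of Theorem \ref{TH5.1}, the key point being the existence of a common core. I would take $D=C_c^\infty(\real^d)$. This is contained in every relevant domain: it is the operator core for $-H(\cdot,D)$ by Assumption A(ii); it lies in the domain of the bounded operator $V$ trivially; and the Taylor estimate of the gradient example shows the derivative limit $\lim_{t\to0}\|\tfrac{S(t)\ffi-\ffi}{t}-b\nabla\ffi\|_\infty=0$ holds for the $C_c^\infty(\real^d)$-functions, whose second derivatives are bounded. The standing assumption of the corollary is precisely that $A$, defined on $C_c^\infty(\real^d)$, is closable and that its closure generates a strongly continuous semigroup $(T_t^A)_{t\ge0}$; by the definition of the closure, $C_c^\infty(\real^d)$ is then automatically a core for $A$. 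Hence all assumptions of Theorem \ref{TH5.1} are met, and it yields that $\bigl(e^{tV}\circ S(t)\circ F(t)\bigr)_{t\ge0}$ is Chernoff equivalent to $(T_t^A)_{t\ge0}$, whence $T_t^A=\lim_{n\to\infty}\bigl[e^{\frac{t}{n}V}\circ S(t/n)\circ F(t/n)\bigr]^n$ in $L(C_\infty(\real^d))$, locally uniformly in $t$.

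Third, to pass from this abstract Feynman formula to the phase-space integral I would compute the kernel of the one-step operator and iterate. Applying $F(t/n)$ to $\ffi$ produces, by \eqref{F(t)}, the oscillatory integral $(2\pi)^{-d/2}\int e^{ip\cdot q}e^{-\frac{t}{n}H(q,p)}\widehat\ffi(p)\,dp$; composing on the left with $S(t/n)$ replaces $q$ by $q+\frac{t}{n}b(q)$ in both the phase and the symbol; and composing with the multiplier $e^{\frac{t}{n}V}$ inserts the factor $e^{\frac{t}{n}V(q)}$. Iterating this one-step operator $n$ times and writing $\widehat\ffi$ back as a spatial integral in $q_n$, one collects the telescoping phase $i\sum_{k=1}^n p_k\cdot(q_{k-1}-q_k+\frac{t}{n}b(q_{k-1}))$, the amplitude $e^{-\frac{t}{n}\sum_{k=1}^n H(q_{k-1}+\frac{t}{n}b(q_{k-1}),p_k)}$, and the potential factor $e^{\frac{t}{n}\sum_{k=1}^n V(q_{k-1})}$, which is exactly the asserted $2dn$-fold integral.

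The main obstacle will be the rigorous justification of these iterated oscillatory integrals: for a general $\ffi\in C_c^\infty(\real^d)$ the function $F(t/n)\ffi$ need not lie in the Schwartz class, so the successive $p$- and $q$-integrals need not converge absolutely and must be read in the regularized sense discussed in Remark \ref{properSense} (for instance as $L_2$-limits under the sufficient conditions of \cite{Hwang87}, or pointwise when the hypotheses of the preceding lemma guarantee that $F(t/n)$ maps $S(\real^d)$ into itself). The abstract convergence supplied by Theorem \ref{TH5.1} is unconditional in $L(C_\infty(\real^d))$; it is only the identification of each finite-$n$ operator $\bigl[e^{\frac{t}{n}V}\circ S(t/n)\circ F(t/n)\bigr]^n$ with the displayed integral that requires this care, which is why the statement carries the caveat that the integrals be understood in a proper sense.
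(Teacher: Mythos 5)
Your proposal is correct and takes essentially the same route as the paper: the corollary is derived exactly as you do, by applying Theorem \ref{TH5.1} with $m=3$ to $F_1(t)=e^{tV}$, $F_2(t)=S(t)$, $F_3(t)=F(t)$ (justified by Theorem \ref{HFF} and the two perturbation examples) and then unwinding the one-step kernel into the $2dn$-fold phase-space integral, read in the regularized sense of Remark \ref{properSense}. Your explicit verification of the common core $C_c^\infty(\real^d)$ and of the telescoping phase merely fills in details the paper leaves implicit.
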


\begin{corollary}[Lagrangian Feynman formula for a mixture of multiplicative and additive perturbations
of  Feller semigroups]
Let $b(\cdot)$, $V(\cdot)$ be as in the above examples, $\widetilde{H}(\cdot, D)$ be as
in Section 4, $\widetilde{F}(t)$---as in \eqref{F(t)-LFF}.  We assume that the function $\widetilde{H}(q,p)$ satisfies
sufficient conditions for  $-\widetilde{H}(\cdot, D)$ and $B:=-\widetilde{H}(\cdot,D)+b\nabla+V$ to be closable and the closures to generate strongly continuous semigroups on $C_\infty(\real^d)$.  Then by
Theorems \ref{LFF!!!}, \ref{TH5.1} and due to the above examples the following Lagrangian
Feynman formula is valid for the semigroup
$(\widetilde{T}^{B}_t)_{t\ge0}$, generated by
$B=-\widetilde{H}(\cdot,D)+b\nabla+V$:
\begin{align*}
&\widetilde{T}^{B}_t\ffi(q_0)\\
=&\lim_{n\rightarrow\infty}\big[e^{\frac{t}{n}V}\circ
S(\tfrac tn)\circ \widetilde{F}(\tfrac tn)
\big]^n\ffi(q_0)\\
=&
\lim_{n\to\infty}\intl_{\real^d}\cdots \intl_{\real^d}e^{\frac{t}{n}\sum_{k=1}^n
V(q_{k-1})}\ffi(q_n)
p_{a\big(q_0+b(q_0)\tfrac tn\big)\tfrac tn}\big(q_0+b(q_0)\tfrac tn,dq_1\big) \times\\
&\mbox{}\times p_{a\big(q_1+b(q_1)\tfrac tn\big)\tfrac tn}\big(q_1+b(q_1)\tfrac tn,dq_2\big)
\ldots  p_{a\big(q_{n-1}+b(q_{n-1})\tfrac tn\big)\tfrac tn}\big(q_{n-1}+b(q_{n-1})\tfrac tn,dq_n\big).
%%%&\quad\mbox{}\timesdq_1\cdots dq_n.
\end{align*}
\end{corollary}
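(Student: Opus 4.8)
The plan is to write $B = -\widetilde{H}(\cdot,D) + b\nabla + V$ as a sum of three generators and invoke the additive perturbation result Theorem~\ref{TH5.1} with $m=3$. I would label $A_1 = V$, $A_2 = b\nabla$ and $A_3 = -\widetilde{H}(\cdot,D)$, so that $A_1+A_2+A_3 = B$, and attach to each summand the Chernoff-equivalent family already constructed above: the multiplication semigroup $F_1(t)=e^{tV}$ (its own semigroup, since $V$ is bounded, as in the bounded Schr\"odinger example), the shift family $F_2(t)=S(t)$ with $S(t)\ffi(q)=\ffi(q+tb(q))$ (Chernoff equivalent to $e^{tb\nabla}$ by the gradient perturbation example), and the family $F_3(t)=\widetilde{F}(t)$ of \eqref{F(t)-LFF} (Chernoff equivalent to $(\widetilde{T}_t)_{t\ge0}$ by Theorem~\ref{LFF!!!}). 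With this labelling the composition $F(t)=F_1(t)\circ F_2(t)\circ F_3(t)$ is precisely $e^{tV}\circ S(t)\circ\widetilde{F}(t)$.

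Next I would verify the hypotheses of Theorem~\ref{TH5.1}. The norm bounds are immediate: $\|\widetilde{F}(t)\|\le1$ and $\|S(t)\|=1$, while $\|e^{tV}\|\le e^{t\|V\|_\infty}$, so $\|F(t)\|\le e^{\|V\|_\infty t}$. Strong continuity of each $F_k$ and the differentiation relation $\lim_{t\to0}\|t^{-1}(F_k(t)\ffi-\ffi)-A_k\ffi\|_\infty=0$ on a core $D_k$ of $A_k$ are exactly the assertions of the three cited results. It remains to exhibit a set $D\subset D_1\cap D_2\cap D_3$ that is a core for $B$; I would take $D=C_c^\infty(\real^d)$, which lies in $D(b\nabla)$ and in $D(V)=C_\infty(\real^d)$, and is a core for $-\widetilde{H}(\cdot,D)$, the latter following from Assumption~A for $H$ together with the boundedness of $a$ away from $0$ and $\infty$.

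Granting the hypotheses, Theorem~\ref{TH5.1} yields that $F(t)=e^{tV}\circ S(t)\circ\widetilde{F}(t)$ is Chernoff equivalent to $(\widetilde{T}^{B}_t)_{t\ge0}$, whence $\widetilde{T}^{B}_t=\lim_{n\to\infty}[F(t/n)]^n$ locally uniformly in $t$. The remaining step is a direct computation. Writing $s=t/n$ and unwinding the three operators at a point $q$ gives
$$
F(s)\ffi(q)=e^{sV(q)}\intl_{\real^d}\ffi(y)\,p_{a(q+sb(q))s}\big(q+sb(q),dy\big),
$$
and iterating $F(s)$ a total of $n$ times, introducing one integration variable $q_k$ at each application, produces the product of $n$ transition kernels anchored at the shifted points $q_{k-1}+sb(q_{k-1})$ together with the accumulated factor $e^{s\sum_{k=1}^{n}V(q_{k-1})}$. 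Substituting $s=t/n$ reproduces the displayed iterated-integral formula.

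I expect the only genuine difficulty to be the common-core requirement in the second paragraph: verifying that $C_c^\infty(\real^d)$ (or another explicit test-function space) is simultaneously a core for the \emph{full} perturbed operator $B$, rather than merely lying in the three individual domains. The statement circumvents this by hypothesis---the standing assumption is precisely that $B$ is closable with closure generating a strongly continuous semigroup and, implicitly, that the common test-function space remains a core---so that the remaining work reduces to the routine norm, continuity and differentiability checks together with the composition bookkeeping above.
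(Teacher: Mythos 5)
Your proposal is correct and follows essentially the same route as the paper: the corollary is obtained there exactly by applying Theorem~\ref{TH5.1} with the three Chernoff-equivalent families $e^{tV}$, $S(t)$ and $\widetilde{F}(t)$ (from the bounded Schr\"odinger example, the gradient example, and Theorem~\ref{LFF!!!}) and unwinding the composition $e^{tV}\circ S(t)\circ\widetilde{F}(t)$ into the iterated kernels anchored at the shifted points $q_{k-1}+b(q_{k-1})t/n$. Your explicit pointwise computation of $F(s)\ffi(q)$ and your flagging of the common-core requirement (which the paper, like you, disposes of by hypothesis) match the paper's intended argument, so there is nothing substantive to add.
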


\begin{example}[Lagrangian Feynman formula for perturbations of the heat semigroup]
Let again $b(\cdot)$, $V(\cdot)$ be as before. Consider $A=\frac12\Delta$; $A$ is  the generator of the heat
semigroup $(T_t)_{t\ge0}$: $T_t\ffi(q)=(2\pi
t)^{(-d/2)}\int_{\real^d}e^{-\frac{|q-y|^2}{2t}}\ffi(y)dy$. Then by
Theorem \ref{TH5.1}  the Lagrangian Feynman formula
is valid (cf. \cite{BGS1}, \cite{BGS2}) for the semigroup $(T^C_t)_{t\ge 0}$ generated by $C:=\frac12\Delta+b\nabla+V$:
\begin{equation*}\begin{aligned}
T^{C}_t\ffi(q_0)
= \lim_{n\to\infty}(2\pi t/n)^{(-dn/2)}
\intl\limits_{\real^d}\cdots \intl\limits_{\real^d}
e^{\frac{t}{n}\sum\limits_{k=1}^n V(q_{k-1})}
&e^{-\sum\limits_{k=1}^n\frac{|q_{k-1}+b(q_{k-1})t/n-q_k|^2}{2t/n}}\times\\
&\times\ffi(q_n)dq_1\cdots dq_n.
\end{aligned}\end{equation*}
Since $|x+b(x)t-y|^2=|x-y|^2+2tb(x)(x-y) +t^2|b(x)|^2$ then
\begin{multline*}
T^{b\nabla+V}_t\ffi(q_0)=
\lim_{n\to\infty}\intl_{\real^d}\cdots \intl_{\real^d}e^{\frac{t}{n}\sum_{k=1}^n
V(q_{k-1})} e^{-\sum_{k=1}^n b(q_{k-1})(q_{k-1}-q_k)}
\times\\\mbox{}\times
e^{-\frac{t}{2n}\sum_{k=1}^n |b(q_{k-1})|^2}p^{BM}_{t/n}(q_0-q_1)\cdots p^{BM}_{t/n}(q_{n-1}-q_n)
\ffi(q_n)dq_1\cdots dq_n,
\end{multline*}
where $p^{BM}_t(x)=(2\pi
t)^{(-d/2)}\exp\big\{-\frac{|x|^2}{2t}\big\}$ is the transition
density of  Brownian motion. Therefore, one can show, that the limit
in the right hand side of the last formula coincides with the
functional integral
\begin{equation}\label{Girsanov}
\mathds{E}^{q_0}\left[ e^{\int_0^t V(\xi_\tau)d\tau}e^{\int_0^t
b(\xi_\tau)d\xi_\tau}e^{-\frac12
\int_0^t|b(\xi_\tau)|^2d\tau}f(\xi_t)\right]
\end{equation}
with respect to  Wiener measure concentrated on the paths starting at
$q_0$. Hence, the machinery of Feynman formulae provides not only
another way to prove the Feynman--Kac formula \eqref{Girsanov} and
 Girsanov's formula but also to extend them for the case of
variable diffusion coefficients (cf. \cite{BGS2}).
\end{example}

\section*{Acknowledgements} Financial support by the Deutsche
Forschungsgemeinschaft through the project SCHI 419/7-1,
 by the Russian Foundation for Basic
Research through the project 10-01-00724-a, by the grant of the
President of Russian Federation through the projects MK-943.2010.1 and   MK-4255.2012.1, by the
Erasmus Mundus Action 2 Programme of the European Union
is gratefully acknowledged.


\begin{thebibliography}{99}
\bibitem{ber-for}
     Berg, C.,  Forst G.,
     \emph{Potential Theory on Locally Compact Abelian Groups},
     Springer, Ergeb.\ Math.\ Grenzgeb.\ Bd.\ {\bfseries 87}, Berlin 1975.

\bibitem{bon-cour-pri68}
     Bony, J.-M., Courr\`ege, Ph.,  Priouret P.,
     Semi-groupes de Feller sur une vari\'et\'e \`a bord
     compacte et probl\`eme aux limites int\'egro-
     diff\'erentiels du second ordre donnant lieu au principe du
     maximum,
     \emph{Ann.\ Inst.\ Fourier\/}
     {\bfseries 18}.2 (1968), 369--521.


\bibitem{BBSchS}  B. B\"{o}ttcher,  Ya.A. Butko, R.L. Schilling and O.G. Smolyanov. Feynman formulae and path integrals for some evolutionary semigroups related to $\tau$-quantization, \emph{Rus. J. Math. Phys.}, \textbf{18} N 4 (2011),  387-399.


\bibitem{schilling-boettcher}
B\"ottcher B.,  Schilling R.L., Approximation of Feller processes
by Markov chains with L\'{e}vy increments. \emph{Stochastics and
Dynamics} \textbf{9} (2009), 71--80.







\bibitem{MZ2} Butko Ya. A.,  Feynman formulas and functional integrals
 for diffusion with drift in a domain on a manifold.   {Math. Notes.} 2008. {V. 83.} N. 3-4. P. 301-316.

\bibitem{Ya06}  Butko Ya. A., Function integrals corresponding to a solution of
the Cauchy-Dirichlet problem for the heat equation in a domain of a
Riemannian manifold,  \emph{J. of Math. Sci.},   2008. V.151. N. 1.
P. 2629-2638.

\bibitem{BGS1} Butko Ya., Grothaus M., Smolyanov O.G., Feynman Formula for a Class of Second-Order Parabolic
Equations in a Bounded Domain. \emph{Doklady Math.} 2008. {V. 78}.
N. 1. P. 590-595.

\bibitem{BGS2} Butko Ya., Grothaus M., Smolyanov O.G.,  Lagrangian Feynman Formulae for  Second Order Parabolic
Equations in  Bounded and Unbounded Domains. \emph{IDAQP} 2010. {V.
13}. N. 3. P. 377-392.


\bibitem{BShS} Butko Ya., Schilling R.L., Smolyanov O.G., Feynman formulae for  Feller
semigroups, \emph{Doklady Math.}, 2010. V.82. N.2. P. 679-683, DOI
10.1134/S1064562410050017.

\bibitem{BSchS2}
Ya.A. Butko, R.L. Schilling and O.G. Smolyanov.  Hamiltonian
Feynman-Kac  and Feynman formulae for dynamics of particles with
position-dependent mass. \emph{Int. J. Theor. Phys.} (2010), 10 p.,
DOI 10.1007/s10773-010-0538-4.



\bibitem{CarMaSi} Carmona R., Masters W.Ch., Simon B., Relativistic
Schroedinger Operators: Asymptotic Behavior of the Eigenfunctions.
\emph{J. Func. Anal.} 1990.V.91. P. 117-142.

\bibitem{Ch2}
 Chernoff P., Product formulas, nonlinear semigroups and addition of
unbounded operators. \emph{Mem. Am. Math. Soc.}, \textbf{140} (1974).

%%\bibitem{EN}  Engel K.-J., Nagel R., One Parameter Semigroups for
%%Linear Evolution Equations. B.: Springer.  2000.

\bibitem{cour66}
     Courr\`ege, Ph.,
     Sur la forme int\'egro-diff\'erentielle des
     op\'erateurs de\/ $C^{\infty}_{K}$ dans\/ $C$ satisfaisant
     au principe du maximum,
     \emph{S\'eminaire de Th\'eorie du Potentiel\/} (1965/66)
     expos\'e 2, 38~pp.

\bibitem{dorroh}
Dorroh J.R., Contraction semi-groups in a function space. \emph{Pacific J.Math.}
\textbf{19} N1 (1966), 35--38.



\bibitem{eth-kur}
     Ethier, S.E.,  Kurtz   T.G.,
     \emph{Markov Processes: Characterization and Convergence\/},
     Wiley, Ser.\ Probab.\ Math.\ Stat., New York
     1986.

\bibitem{Feyn1}
 Feynman R. P., Space-time approach to nonrelativistic quantum
mechanics. \emph{Rev. Mod. Phys.} \textbf{20} (1948), 367--387.

\bibitem{Feyn2}  Feynman R.P., An Operator Calculus Having
Applications
in Quantum Electrodynamics, \emph{Phys. Rev.}, {\bfseries 84} (1951),
108-128.


\bibitem{GS} Gadella M., Smolyanov O.G., Feynman Formulas for
Particles with Position-Dependent Mass. \emph{Doklady Math.}  2007.
{V. 77}. N. 1. P. 120-123.

\bibitem{Hwang87} Hwang I.L., The $L_2$-boundness  of
pseudodifferential operators.
\newblock{\em Trans. A.M.S.} {\bfseries 302} N 1 (1987),  55-76.


\bibitem{IchTam86}
 Ichinose T.,  Tamura H.,
\newblock Imaginary-time integral for a relativistic spinless
particle in  an electromagnetic field.
\newblock {\em Commun. Math. Phys.}, 105: 239--257, 1986.


\bibitem{jac-pota}
Jacob N., Characteristic functions and symbols in the theory of
Feller processes. \emph{Potential Anal.} \textbf{8} (1998), 61--68.

\bibitem{NJ}
 Jacob N., Pseudo-differential operators and Markov processes. Vol.
 1-3. Imperial College Press, 2001.



\bibitem{JP} Jacob N., Potrykus A., Roth's Method Applied to Some
Pseudo-Differential Operators with Bounded Symbols. A Case Study.
\emph{Rendiconti del Circ. Mat. di Palermo.} 2005. Ser. II. N. 76.
P. 45-57.

\bibitem{JSch}  Jacob N.,  Schilling R.L., L\'evy-type processes and pseudo
 differential operators, O. Barndorff-Nielsen, T. Mikosch and S.
 Resnick (eds.): L\'evy processes: theory and applications, Birkh\"auser,
 Boston 2001, 139--167.

\bibitem{Nelson}
 Nelson E., Feynman integrals and the Schr\"odinger equation.
\emph{J. Math. Phys.} \textbf{3} (1964), 332--343.

\bibitem{Obr1}  Obrezkov O.O., The Proof of the Feynman-Kac Formula for Heat
Equation on a Compact Riemannian Manifold, \emph{IDAQP}, 2003.  {V.
6.}
 N. 2.  P. 311-320.

\bibitem{OST} Obrezkov O., Smolyanov O.G., Truman A., The Generalized Chernoff Theorem
and Randomized Feynman Formula. \emph{Doklady Math.} 2005.  V. 71.
N. 1. P. 105-110.

\bibitem{ReedSimon} Reed M., Simon B., Methods of Modern Mathematical Physics, Vol.
I, Academic Press, 1980.


\bibitem{Sato} Sato K., L\'evy Processes and Infinitely Divisible
Distributions. Cambridge Univ. Press. 1999.

\bibitem{schilling-pota}  Schilling R.L., Conservativeness of semigroups generated by pseudo differential operators,
\emph{Potential Anal.} 9 (1998), 91 - 104.

\bibitem{schilling-schnurr}
 Schilling R.L. and  Schnurr A., The symbol associated with the
solution of a stochastic differential equation. \emph{ El. J.
Probab. } 15 (2010), 1369-1393.


\bibitem{SSham}
 Smolyanov O.G. and  Shamarov N.N., Feynman and Feynman-Kac
formulae for evolutionary equations with Vladimirov operator,
\emph{Doklady Math.} \textbf{77} (2008), 345--349.

\bibitem{STT} Smolyanov O.G., Tokarev A.G., Truman A.,
Hamiltonian Feynman
 path integrals via the Chernoff formula.
\emph{ J. Math. Phys. } 2002. V.43. N. 10. P. 5161-5171.


\bibitem{SWW1}
 Smolyanov O.G.,  Weizs\"acker H.v. and Wittich O., Diffusion on
compact Riemannian manifolds, and surface measures. \emph{Doklady
Math.} \textbf{61} (2000), 230--234.



\bibitem{SWW2}  Smolyanov O.G.,  Weizs\"acker H. v.,  Wittich O., Brownian
Motion on a Manifold as Limit of Stepwise Conditioned Standard
Brownian Motions. In: Stochastic Proceses, Physics and Geometry: New
Interplays. II: A Volume in Honor of Sergio Albeverio. Ser.
Conference Proceedings. Canadian Math. Society. Providence: AMS.
2000. V. 29. P. 589-602.

\bibitem{SWW2007} Smolyanov O.G.,  Weizs\"{a}cker H.v., Wittich O., Surface
 Measures and Initial Boundary Value Problems
Generated by Diffusions with Drift. \emph{Doklady Math.} 2007. Vol.
76. N. 1. P. 606-610.


\bibitem{SWW5}  Smolyanov O.G.,  Weizs\"{a}cker H. v.,  Wittich O.,
Chernoff's Theorem and Discrete Time Approximations of Brownian
Motion on Manifolds.
 \emph{  Potent. Anal.} 2007.  V. 26. N. 1. P.  1-29.




\bibitem{Telyat1} Telyatnikov I.V., Smolyanov--Weizs\"{a}cker
surface measures generated by diffusions on the set of trajectories
in Riemannian manifolds,\emph{IDAQP.} 2008. V. 11. N. 1. P. 21-31.

\bibitem{wald61}
     von Waldenfels, W.,
     Eine Klasse station\"arer Markowprozesse,
     \emph{Berichte der Kernforschungsanlage J\"ulich\/}
     (1961).
\bibitem{wald64}
     von Waldenfels, W.,
     Positive Halbgruppen auf einem $n$-dimensionalen
     Torus,
     \emph{Arch.\ Math.\/} {\bfseries 15} (1964), 191--203.

\end{thebibliography}
\end{document}